\documentclass[10pt]{article} 
\usepackage{multicol} 
\usepackage{newlfont}
\usepackage{amsmath,amssymb}
\usepackage{tikz}

\topmargin-.5truein
\textwidth6.5truein
\textheight8.5truein
\oddsidemargin0pt
\def\rit{\mathbb{R}}
\def\zit{\mathbb{Z}}   
\def\nit{\mathbb{N}}

\def\ppit{\mathbb{P}} 
\def\qit{\mathbb{Q}} 
\def\cit{\mathbb{C}}

\newcommand{\pf}{{\em Proof.~}}
\newcommand{\qed}{\hfill~~\mbox{$\Box$}}

\newenvironment{proof}{\smallskip \noindent \pf}{\qed \bigskip}

\newtheorem{theorem}{Theorem}[section]
\newtheorem{proposition}[theorem]{Proposition}
\newtheorem{definition}[theorem]{Definition}

\newtheorem{corollary}[theorem]{Corollary}

\newtheorem{remark}[theorem]{Remark}
\newtheorem{example}[theorem]{Example}

\DeclareMathOperator{\card}{Card}

\DeclareMathOperator{\age}{age}
\DeclareMathOperator{\vol}{vol}
\DeclareMathOperator{\boite}{Box}

\DeclareMathOperator{\gr}{gr}
\DeclareMathOperator{\Spec}{Spec}
\DeclareMathOperator{\conv}{conv}

\DeclareMathOperator{\orb}{orb}

\DeclareMathOperator{\supp}{supp}

\begin{document}


\title{\bf Hard Lefschetz properties and distribution of spectra in singularity theory and Ehrhart theory}
\author{\sc Antoine Douai\thanks{Mathematics Subject Classification 52B20, 32S40, 14J33.
Key words and phrases: toric varieties, hard Lefschetz properties, spectrum of regular functions and polytopes, mirror theorem, orbifold cohomology, distribution of spectral numbers}\\ 
Universit\'e C\^ote d'Azur, CNRS, LJAD, FRANCE\\
Email address: antoine.douai@univ-cotedazur.fr}

\maketitle

\begin{abstract}
Motivated by the distribution of spectra in singularity theory and combinatorics, we study a hard Lefschetz property for Laurent polynomials and for polytopes
and we give combinatorial criteria for this property to be true. 
This also provides informations about a conjecture by Katzarkov-Kontsevich-Pantev. 
\end{abstract}

\section{Introduction}

Let $P$ be a lattice polytope in $\rit^n$ (the convex hull of a finite set in $N:=\zit^n$).
Define, for a positive integer $\ell$, $L_P (\ell ):= \card ( (\ell P )\cap N)$. Then $L_P$ is a polynomial in $\ell$ of degree $n$, the {\em Ehrhart polynomial} of $P$ and 
\begin{equation}\label{eq:serie Ehrhart}
1+\sum_{m\geq 1}L_P (m) z^m =\frac{\delta_0 +\delta_1 z +\cdots +\delta_n z^n}{(1-z)^{n+1}}
\end{equation}
where the $\delta_j$'s are nonnegative integers. The vector
$\delta_P =( \delta_0 ,\cdots ,\delta_n )$ is called the {\em $\delta$-vector} of the polytope $P$. The first result in the study of the  distribution of the $\delta$-vector is probably Hibi's symmetry property $\delta_i =\delta_{n-i}$ for $i=1,\cdots ,n$, which is actually a characterization of reflexive polytopes \cite{Hibi}. 
The second one concerns the unimodality of the $\delta$-vector of a reflexive polytope: taking into account the previous symmetry property, one could expect 
$\delta_0 \leq \delta_1 \leq \cdots \leq \delta_{[n/2]}$ and $\delta_{[n/2]}\geq \delta_{[n/2]+1}\geq \cdots \geq \delta_n$. This is indeed what happens in dimension less than or equal to five \cite{Hibi0}, but this unimodality may fail in dimension greater than or equal to six, see for instance \cite[Example 3.4]{Sta}, \cite{MP}, \cite{Payne}. 

On the other hand, singularity theory meets Ehrhart theory by the means of the $\delta$-vector: the spectrum at infinity of a tame Laurent polynomial determines the $\delta$-vector of its Newton polytope and both coincide if the latter is reflexive \cite{D12}. This interplay encourages us also to study the unimodality (and more generally, the distribution) of the spectrum at infinity of a regular function. 

Classically, unimodality can be seen as a combinatorial application of the hard Lefschetz theorem (see \cite{Stanley} for instance where it is shown that 
the Poincar\'e polynomial of a smooth complex projective variety is unimodal) and 
we are naturally led to study a hard Lefschetz property for regular functions (singularity side) and for polytopes (Ehrhart theory side). 
On the singularity side, the hard Lefschetz property for a Laurent polynomial $f$ is provided by the 
multiplication by $f$ on a graded Jacobi ring. The hard Lefschetz property for a simplicial polytope $P$ is provided by the hard Lefschetz property for the orbifold cohomology of the orbifold associated with $P$ by the work of Borisov, Chen and Smith \cite{BCS}.
Both are related by a mirror theorem.
This is detailed in Section \ref{sec:HardLefschetzLaurentSimplices},
where we  also give  a combinatorial criterion
for these hard Lefschetz properties to be satisfied:
let $P$ be a full dimensional, simplicial, lattice polytope in $\rit^n$ containing the origin as an interior point and let $\Sigma_P$ be the fan over the proper faces of $P$. For a $n$-dimensional cone $\sigma \in\Sigma_P$, let $\boite (\sigma )$ be the set of $v\in N$ such that $v=\sum_{\rho_i \subseteq\sigma} q_i b_i $ for some $0\leq q_i <1$, where $\rho_i$ denotes the ray generated by the vertex $b_i$ of $P$. 
We have the following generalization of \cite[Proposition 4.1]{Fe} (see Proposition \ref{prop:ConcretAge}):

\begin{theorem} 
\label{theo:Intro}
The polytope $P$ satisfies the hard Lefschetz property 
if and only if
$$[\nu (v) ]=(\dim \sigma (v) -1)/2\ \mbox{if}\ \nu (v)\notin\nit$$
and 
$$\nu (v)=\dim \sigma (v)/2 \ \mbox{if}\ \nu (v)\in\nit$$
for all $v\in\cup_{\sigma}\boite (\sigma )$ (the union is taken over all the $n$-dimensional cones of $\Sigma_P $), where
$\sigma (v)$ denotes the smallest cone of $\Sigma_P$ containing $v$ and $\nu$ is the Newton function  of $P$.
\end{theorem}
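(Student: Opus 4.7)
The plan is to interpret the hard Lefschetz property for $P$, via the mirror identification and the Borisov-Chen-Smith description of Section~\ref{sec:HardLefschetzLaurentSimplices}, as the standard hard Lefschetz property for the orbifold cohomology of the toric orbifold $\mathcal{X}_{\Sigma_P}$ associated to $\Sigma_P$. Recall that BCS gives a decomposition
$$H^*_{\mathrm{orb}}(\mathcal{X}_{\Sigma_P}) = \bigoplus_{v \in \cup_\sigma \boite(\sigma)} H^{*-2\nu(v)}(X_{\sigma(v)}),$$
where each $X_{\sigma(v)}$ is a projective simplicial toric suborbifold of complex dimension $m:=n-\dim\sigma(v)$, and the degree of a class from sector $v$ is shifted by the age $2\nu(v)$.

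I would first analyse the Lefschetz map $\omega^k:H^{n-k}_{\mathrm{orb}} \to H^{n+k}_{\mathrm{orb}}$ sector by sector. On sector $v$, the orbifold K\"ahler class $\omega$ restricts to a K\"ahler class on $X_{\sigma(v)}$, and the classical hard Lefschetz theorem on the projective simplicial toric orbifold $X_{\sigma(v)}$ produces isomorphisms $\omega^{k'}: H^{m-k'}(X_{\sigma(v)}) \to H^{m+k'}(X_{\sigma(v)})$ for $0\le k'\le m$. Requiring the orbifold map restricted to sector $v$, which reads $\omega^k : H^{n-k-2\nu(v)}(X_{\sigma(v)})\to H^{n+k-2\nu(v)}(X_{\sigma(v)})$, to coincide with this classical isomorphism yields the linear system $m-k'=n-k-2\nu(v)$ and $m+k'=n+k-2\nu(v)$; summing gives $\nu(v)=\dim\sigma(v)/2$, which is exactly the integer-case condition of the theorem, and the classical Lefschetz theorem on $X_{\sigma(v)}$ then supplies sufficiency.

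For the non-integer case, sector $v$ contributes to a fractional-degree summand and is paired under Chen-Ruan Poincar\'e duality with its inverse sector $v^{-1}$, of age $\dim\sigma(v)-\nu(v)$. The pair $(v,v^{-1})$ collectively contributes to the degree range
$$[2\nu(v),2\nu(v)+2m]\,\cup\,[2\dim\sigma(v)-2\nu(v),2n-2\nu(v)],$$
whose midpoint is always $n$. I would show, by matching these ranges against the Lefschetz window $[0,2n]$ and tracking how $\omega^k$ moves classes within each range, that the orbifold Lefschetz isomorphism on the pair $(v,v^{-1})$ holds precisely when $\nu(v)\in((\dim\sigma(v)-1)/2,(\dim\sigma(v)+1)/2)$, equivalently $[\nu(v)]=(\dim\sigma(v)-1)/2$ (which in particular forces $\dim\sigma(v)$ to be odd).

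The hard part will be this non-integer case: the interplay of the fractional grading with the Chen-Ruan pairing $v\leftrightarrow v^{-1}$ means that a na\"ive sector-by-sector analysis no longer suffices, and one has to verify carefully that the proximity condition on $\nu(v)$ is both necessary and sufficient for the combined $(v,v^{-1})$ contribution to satisfy the orbifold Lefschetz property. Quantifying the resulting condition over all $v\in\cup_\sigma\boite(\sigma)$ then yields the stated characterisation.
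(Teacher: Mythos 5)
Your strategy is the same as the paper's: pass to orbifold cohomology, use the fact that $L_{\omega}$ preserves each twisted sector, invoke the relation $\age(\mathcal{X}_v)+\age(\mathcal{X}_{v^{-1}})=\dim\sigma(v)$ together with $\age(\mathcal{X}_v)=\nu(v)$, and get sufficiency from the classical hard Lefschetz theorem on each sector. Your integer case is correct (your ``summing the two equations'' computation is exactly the statement that the degree range of the untwisted-type sector must be centred at the Lefschetz centre). The problem is the non-integer case, which you yourself flag as ``the hard part'' and then do not carry out: the necessity direction there is asserted (``I would show\ldots one has to verify carefully'') rather than proved, and this is precisely where the content of the theorem lies. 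Moreover the condition you state for that case is not quite right: $\nu(v)\in\bigl((\dim\sigma(v)-1)/2,(\dim\sigma(v)+1)/2\bigr)$ is \emph{not} equivalent to $[\nu(v)]=(\dim\sigma(v)-1)/2$ when $\dim\sigma(v)$ is even (e.g.\ $\nu(v)=0.7$, $\dim\sigma(v)=2$ lies in the interval but the floor identity is impossible since the right-hand side is not an integer). The open interval misses the parity constraint that the theorem's floor condition encodes. Relatedly, your framing via the combined degree range of the pair $(v,v^{-1})$, ``whose midpoint is always $n$,'' is just Poincar\'e duality and carries no information; since $L_{\omega}$ does not mix $v$ with $v^{-1}$, the actual constraint is that \emph{each individual} sector's range be centred correctly, and the pair enters only through the age sum formula.

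The paper closes the gap you left open by a short concrete argument you should adopt: restrict the assumed isomorphism $L_{\omega}^{n-1-2k}:H^{2(\alpha+k)}_{\orb}\to H^{2(\alpha+n-1-k)}_{\orb}$ (with $\alpha=\nu(v)-[\nu(v)]$) to the sector $\mathcal{X}_v$ and specialise to $k=[\nu(v)]$. Using $\dim\mathcal{X}_v=n-\nu(v)-\nu(v^{-1})$ and $\nu(v)+\nu(v^{-1})=[\nu(v)]+[\nu(v^{-1})]+1$, this yields an isomorphism $H^{0}(\mathcal{X}_v,\cit)\cong H^{2(\dim\mathcal{X}_v-[\nu(v)]+[\nu(v^{-1})])}(\mathcal{X}_v,\cit)$; since $H^0\neq 0$ and cohomology vanishes above degree $2\dim\mathcal{X}_v$, one gets $[\nu(v^{-1})]\leq[\nu(v)]$, and swapping $v$ with $v^{-1}$ gives equality. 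Combined with the age sum formula this is exactly $[\nu(v)]=(\dim\sigma(v)-1)/2$ in the non-integer case. For the converse you should also justify that $\omega$ restricts to an ample class on each twisted sector (the paper does this by noting that a strictly convex support function descends to the quotient fan); without that, the classical hard Lefschetz theorem on $X_{\sigma(v)}$ is not available.
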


When applied to a reduced simplex $\Delta$, this criterion reduces to an arithmetic condition on its
weight, see Proposition \ref{prop:CNSHLPolSimplex} (the {\em weight} of a simplex $\Delta :=\conv (v_0 ,\cdots ,v_n )$ is
the tuple $(q_0 ,\cdots , q_n )$, arranged by increasing order, where 
$q_i := |\det (v_0 ,\cdots , \widehat{v_i},\cdots , v_n )|$ and 
the simplex $\Delta$ is {\em reduced} if $\gcd (q_0 ,\cdots ,q_n )=1$, see Section \ref{sec:simplices} for details).
For instance, if moreover $\Delta$ is 
{\em reflexive}, that is if $q_i$ divides $\mu :=q_0 +\cdots +q_n$ for $i=0,\cdots ,n$, we get:

\begin{proposition}\label{prop:NCHLSimplicesIntro}
Assume that the reduced and reflexive simplex $\Delta$ of weight $(q_0 ,\cdots ,q_n )$ satisfies the hard Lefschetz property. Then, 
\begin{equation}\label{eq:NCHLSimplicesIntro}
\frac{2\mu }{q_n }=n+1 +m(q_n ) 
\end{equation}
where $m(q_n )$ denotes the multiplicity of $q_n$ in the ordered tuple $(q_0 ,\cdots , q_n )$.
\end{proposition}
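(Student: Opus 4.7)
The plan is to exhibit one explicit box element $v$ of $\Sigma_\Delta$ for which the criterion of Theorem \ref{theo:Intro} directly delivers (\ref{eq:NCHLSimplicesIntro}); the mildly delicate step will be checking that the natural candidate produced by the weight relation is actually a lattice point. The starting point is the unique (up to positive scalar) linear relation among the vertices
\[
q_0 v_0 + q_1 v_1 + \cdots + q_n v_n = 0,
\]
valid because $0$ lies in the interior of $\Delta$ and $q_i$ is, up to a consistent sign, $\det(v_0,\ldots,\widehat{v_i},\ldots,v_n)$.

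The candidate I would take is
\[
v := \sum_{i\,:\,q_i < q_n} \frac{q_i}{q_n}\, v_i,
\]
viewed inside the top-dimensional cone $\sigma_n := \mathrm{cone}(v_0,\ldots,v_{n-1}) \in \Sigma_\Delta$. Its coefficients along the rays of $\sigma_n$ are $q_i/q_n \in [0,1)$ for $q_i<q_n$ and $0$ for $q_i=q_n$ with $i<n$, so only $v\in N$ needs to be checked. Solving the weight relation for $v_n$ and subtracting the terms with $q_i=q_n$ gives the clean closed form $v=-\sum_{i\,:\,q_i=q_n} v_i\in N$, hence $v\in\boite(\sigma_n)$.

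Writing $m := m(q_n)$, the indices with $q_i<q_n$ are exactly $\{0,\ldots,n-m\}$, so $v$ lies in the relative interior of the face cone $\mathrm{cone}(v_0,\ldots,v_{n-m})$; hence $\dim\sigma(v)=n-m+1$ and
\[
\nu(v)=\sum_{i=0}^{n-m}\frac{q_i}{q_n}=\frac{\mu-mq_n}{q_n}=\frac{\mu}{q_n}-m.
\]
The reflexive hypothesis $q_n\mid\mu$ makes $\nu(v)$ an integer, so Theorem \ref{theo:Intro} forces $\nu(v)=\dim\sigma(v)/2$, i.e.\ $\mu/q_n-m=(n-m+1)/2$, which rearranges to (\ref{eq:NCHLSimplicesIntro}). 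The only degenerate case is $v=0$, which occurs precisely when all weights coincide; the reduced hypothesis then forces $q_i=1$ for every $i$, so $\mu=n+1$, $m=n+1$, and (\ref{eq:NCHLSimplicesIntro}) holds trivially.
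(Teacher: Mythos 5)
Your proof is correct, and it reaches the conclusion by a route that is genuinely more self-contained than the paper's. The paper derives (\ref{eq:NCHLSimplicesIntro}) in Remark \ref{rem:NCHLSimplices} as the $i=2$ instance of Corollary \ref{coro:NCSHLSimplices} (itself a consequence of Proposition \ref{prop:CNSHLPolSimplex}), using $f_2=1/q_n$, $d_1=n+1$, $d_2=m(q_n)$ and the age formula $\age(\mathcal{X}_{f_i})=\sum_{\ell<i}d_\ell-\mu f_i$ imported from \cite[Section 3.4]{DoMa}. You instead apply the general box-element criterion of Theorem \ref{theo:Intro} directly, exhibiting the explicit box element $v=\sum_{q_i<q_n}(q_i/q_n)v_i=-\sum_{q_i=q_n}v_i$; the two arguments are the same computation in disguise, since your $v$ is precisely the box element labelling the sector $f_2=1/q_n$ and $\nu(v)$ is its age, but your version needs no external input beyond the relation $\sum_i q_i v_i=0$ and Theorem \ref{theo:Intro}. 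All the delicate points are handled: integrality of $v$ via the weight relation, identification of $\sigma(v)$ as $\mathrm{cone}(v_0,\dots,v_{n-m})$ of dimension $n-m+1$ (legitimate since the coefficients of $v$ on those rays are strictly positive and the fan is simplicial), the use of reflexivity to place $\nu(v)=\mu/q_n-m$ in $\nit$ so that the integer branch of the criterion applies, and the degenerate case $q_0=\cdots=q_n$ where reducedness forces all weights to be $1$ and the identity holds trivially. One small remark: what your argument buys, besides independence from \cite{DoMa}, is a concrete geometric reason why $q_n$ (the largest weight) governs the first obstruction, namely that $-\sum_{q_i=q_n}v_i$ is the ``deepest'' lattice point in the box; what the paper's route buys is the full list of conditions for all $i\geq 2$ at once, of which your $v$ produces only the first.
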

See Remark \ref{rem:NCHLSimplices}. For example, (\ref{eq:NCHLSimplicesIntro}) fails for
the three dimensional reflexive  and reduced simplex $\Delta$ of weight $(1,1,1,3)$: this simplex does not satisfy the hard Lefschetz property. 
Actually, we have a stronger statement (a necessary and sufficient condition, see Corollary \ref{coro:NCSHLSimplices}) and
it follows from our computations that the hard Lefschetz properties are not common at all (and this answers a question in \cite[Section 4]{Fe}): for instance, we check that the hard Lefschetz property is true for 2 out the 14 three dimensional reduced and reflexive simplices described in \cite{Conrads}. 

This has an interpretation in Hodge theory: it has been noticed in \cite{Sab} that a Laurent polynomial $f$ satisfies the hard Lefschetz property if and only if the mixed Hodge structure produced by the Laplace transform of its Gauss-Manin system is of Hodge-Tate type. As a consequence, we get
informations about a conjecture by Katzarkov-Kontsevich-Pantev \cite[Conjecture 3.6]{KKP}, see Proposition \ref{prop:KKPSimplices}: it turns out that only a few Laurent polynomials whose Newton polytopes are reduced and reflexive simplices satisfy this conjecture.

Last, and this was after all our starting point, the hard Lefschetz properties studied in this paper are related with the unimodality of the spectrum at infinity of a regular function. 
This is discussed in Section \ref{sec:Distribution}. 

These notes were motivated by Sabbah's paper \cite{Sab}, in which the "smooth" case is considered.

\section{Spectra}

\label{sec:Spectra}

In this section, we recall some results from \cite{D12}. Let $N$ be the lattice $\zit^n$ and
let $P\subset N_{\rit}$ be a full dimensional lattice polytope containing the origin as an interior point. 
We assume throughout this paper that $P$ is {\em simplicial}. 

Let $\Sigma_P$ be the (simplicial) fan in $N_{\rit}$ obtained by taking the cones over the proper faces of $P$ and let $X_{\Sigma_P}$ be the complete, projective, toric variety  of the fan $\Sigma_P$.
The {\em Newton function} of $P$ is the function
$\nu :N_{\rit}\rightarrow \rit$
which takes the value $1$ at the vertices of $P$ and which is linear on each cone of $\Sigma_P$. 
The {\em Milnor number} of $P$ is $\mu_P :=n! \vol (P)$
where the volume $\vol (P)$ is normalized such that the volume of the cube is equal to $1$. 
We define the {\em Newton spectrum of} $P$ by
\begin{equation}\label{eq:DefSpecP}
\Spec_{P}(z):=(1-z)^n \sum_{v\in N} z^{\nu (v)}.
\end{equation}

Let $f(u)=\sum_{m\in\zit^n} a_m u^{m}$ be a Laurent polynomial defined on $(\cit^* )^n$. 
The {\em Newton polytope} $P$ of $f$ is the convex hull of $\supp f :=\{m\in\zit^n , \ a_m \neq 0\}$ in $\rit^n$. 
We assume in this text that $f$ is convenient (its Newton polytope contains the origin as an interior point) and nondegenerate in the sense of Kouchnirenko \cite{K}. 
Let $\mathcal{A}_f :=\mathcal{B}/ \mathcal{L}$ where 
$\mathcal{B}:=\cit [u_1 , u_1^{-1},\cdots , u_n ,u_n^{-1}]$
and $\mathcal{L}:=(u_1\frac{\partial f}{\partial u_1},\cdots , u_n\frac{\partial f}{\partial u_n})$
is the ideal generated by the partial derivative $u_1\frac{\partial f}{\partial u_1},\cdots , u_n\frac{\partial f}{\partial u_n}$ of $f$.
We define an increasing filtration $\mathcal{N}_{\bullet}$ on $\mathcal{B}$, indexed by $\qit$, by setting  
\begin{equation}\nonumber
\mathcal{N}_{\alpha}\mathcal{B}:=\{g\in \mathcal{B},\ \supp (g) \in \nu^{-1}(]-\infty ; \alpha ]) \}
\end{equation} 
where $\nu$ is the Newton function of the Newton polytope $P$ of $f$ and
$\supp (g)=\{m\in\nit^n,\ a_m \neq 0\}$ if $g=\sum_{m\in \nit^n}a_m u^m \in\mathcal{B}$. 
By projection, the Newton filtration $\mathcal{N}_{\bullet}$ on $\mathcal{B}$ induces
the Newton filtration $\mathcal{N}_{\bullet}$ on $\mathcal{A}_f$ and 
the {\em spectrum at infinity} of $f$ is given by
\begin{equation}\label{def:SpectrumInfinity}
\Spec_f (z)=\sum_{\alpha\in\qit}\dim_{\cit}(\gr_{\alpha} ^{\mathcal{N}}\mathcal{A}_f )   z^{\alpha}.
\end{equation}

\noindent Both spectra are related: if $f$ is a convenient and nondegenerate Laurent polynomial with Newton polytope  $P$, we have $\Spec_f (z)=\Spec_P (z)$, see \cite[Corollary 2.2]{D12}.

We are interested in the distribution of $\Spec_f (z)$ and $\Spec_P (z)$ and it will be useful to decide when these spectra are polynomials.
Recall that a lattice polytope $P$ is {\em reflexive} if it contains the origin as an interior point and
if its polar polytope $P^{\circ}:=\{y\in M_{\rit},\ \langle y, x \rangle\leq 1\ \mbox{for all}\ x\in P\}$ is a lattice polytope.

\begin{proposition}\label{prop:ReflexivevsIntegral}\cite[Proposition 5.1]{D12}
The following are equivalent:
\begin{enumerate}
\item $\Spec_P (z)$ is a polynomial, 
\item $P$ is reflexive,
\item $\Spec_{P}(z)=\delta_0 +\delta_1 z +\cdots +\delta_n z^n$ where $(\delta_0 ,\cdots , \delta_n )$ is the $\delta$-vector of $P$.\qed
\end{enumerate}
\end{proposition}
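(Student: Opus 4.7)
The plan is to compare the Newton spectrum with the Ehrhart series of $P$ and to translate the integrality of the Newton function $\nu$ on $N$ into reflexivity. The starting observation is that $L_P(\ell)=\#\{v\in N:\nu(v)\leq \ell\}$, which follows from $v\in \ell P \Leftrightarrow \nu(v)\leq \ell$ (using positive homogeneity of $\nu$). Swapping the order of summation in the Ehrhart series, and noting $\nu(0)=0$, yields
$$1+\sum_{\ell\geq 1}L_P(\ell)z^\ell=\sum_{v\in N}\frac{z^{\lceil \nu(v)\rceil}}{1-z};$$
combining with (\ref{eq:serie Ehrhart}) gives
$$\delta_0+\delta_1 z+\cdots+\delta_n z^n=(1-z)^n\sum_{v\in N}z^{\lceil \nu(v)\rceil}.$$
Comparing with the definition (\ref{eq:DefSpecP}) of $\Spec_P(z)$, this produces the pivotal equivalence
$$\Spec_P(z)=\delta_0+\delta_1 z+\cdots+\delta_n z^n \Longleftrightarrow \sum_{v\in N}z^{\nu(v)}=\sum_{v\in N}z^{\lceil \nu(v)\rceil} \Longleftrightarrow \nu(v)\in\zit\ \text{for every }v\in N,$$
since the last biconditional just reads off coefficients at non-integer exponents.

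Next I would match ``$\nu$ is $\zit$-valued on $N$'' with reflexivity of $P$. On each maximal cone $\sigma\in\Sigma_P$, the Newton function is the linear form $\langle m_\sigma,\cdot\rangle$ for a unique $m_\sigma\in M_\qit$, and the $m_\sigma$ are precisely the vertices of $P^\circ$. Hence $P$ is reflexive iff every $m_\sigma$ lies in $M$. Since $\sigma\cap N$ generates $N$ as a group (the cone $\sigma$ being full dimensional), $\nu$ being integer-valued on $N$ forces $m_\sigma\in M$, and the converse is immediate.

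With both equivalences in hand, (2)$\Rightarrow$(3) is immediate and (3)$\Rightarrow$(1) is trivial. For (1)$\Rightarrow$(2), I would argue by contradiction: assuming $\Spec_P(z)\in\cit[z]$, suppose some $v\in N$ has $\nu(v)\notin\zit$ and let $\alpha_0$ be the minimum of $\nu$ on $\{v\in N:\nu(v)\notin\zit\}$. This minimum exists because each sublevel set $\{v:\nu(v)\leq C\}$ is finite. Expanding
$$\Spec_P(z)=\sum_{k=0}^n\binom{n}{k}(-z)^k\sum_{v\in N}z^{\nu(v)},$$
the coefficient of $z^{\alpha_0}$ equals $\#\{v\in N:\nu(v)=\alpha_0\}>0$: each exponent $\alpha_0-k$ with $k\geq 1$ shares the nonzero fractional part of $\alpha_0$ and is strictly smaller, so by minimality it is attained at no lattice point. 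Since $\alpha_0\notin\zit$, this contradicts $\Spec_P(z)\in\cit[z]$. I expect this final minimality argument to be the only genuine subtlety; the other two implications are bookkeeping on power series.
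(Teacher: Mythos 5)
Your proof is correct. Note first that the paper itself offers no argument here: Proposition \ref{prop:ReflexivevsIntegral} is imported verbatim from \cite[Proposition 5.1]{D12} and closed with a \qed, so your write-up is a genuine self-contained reconstruction rather than a variant of an in-paper proof. The skeleton is sound and, as far as I can tell, matches the natural (and the cited) line of reasoning: the identity $\ell P\cap N=\{v:\nu(v)\leq\ell\}$ turns the Ehrhart series into $(1-z)^{-1}\sum_v z^{\lceil\nu(v)\rceil}$, whence $\delta_P(z)=(1-z)^n\sum_v z^{\lceil\nu(v)\rceil}$, and comparison with (\ref{eq:DefSpecP}) reduces everything to the integrality of $\nu$ on $N$; the identification of the linear forms $m_\sigma$ with the vertices of $P^\circ$ then gives reflexivity. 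The two places where care is needed are both handled: the observation that $\sigma\cap N$ generates $N$ as a group for a full-dimensional cone (take $w$ deep in $\sigma$ and write $v=(w+v)-w$), which upgrades ``$\langle m_\sigma,\cdot\rangle$ integral on $\sigma\cap N$'' to $m_\sigma\in M$; and the minimality argument for (1)$\Rightarrow$(2), which correctly isolates the $k=0$ term in the expansion of $(1-z)^n$ because the shifted exponents $\alpha_0-k$ keep the same nonzero fractional part. One presentational point: you are implicitly reading ``$\Spec_P(z)$ is a polynomial'' as ``lies in $\cit[z]$,'' i.e.\ all exponents are integers (the sum is automatically finite since $\Spec_P=\Spec_{f}$ for a nondegenerate $f$ with Newton polytope $P$); that is the intended reading in the paper, and your contradiction at the non-integer exponent $\alpha_0$ is exactly what refutes it, but it is worth stating explicitly which ring the series lives in (formal series in $z^{1/D}$ for a common denominator $D$ of the $m_\sigma$) so that inverting $(1-z)^n$ and comparing coefficients is legitimate.
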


On the singularity side, we get (and we will refer to this case as the {\em unipotent} case):

\begin{corollary}\label{coro:UnipotentReflexif}
Let $f$ be a convenient and nondegenerate Laurent polynomial.
Then its spectrum at infinity $\Spec_f (z)$ is a polynomial if and only if its Newton polytope $P$ is reflexive.\qed
\end{corollary}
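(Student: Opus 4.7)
The plan is to chain together the two results stated just above the corollary in the excerpt, so the argument is essentially a one-line deduction. First, I would invoke the identification $\Spec_f(z)=\Spec_P(z)$ recalled right after the definitions of the two spectra (this is \cite[Corollary 2.2]{D12}, which applies precisely because $f$ is assumed convenient and nondegenerate with Newton polytope $P$). This transports the question from the singularity side, where the spectrum at infinity is defined via the graded pieces of the Newton filtration on the Jacobi ring $\mathcal{A}_f$, to the purely combinatorial side, where the Newton spectrum of $P$ is defined by the generating series in (\ref{eq:DefSpecP}).

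Once this translation is made, the statement ``$\Spec_f(z)$ is a polynomial'' becomes ``$\Spec_P(z)$ is a polynomial''. Proposition \ref{prop:ReflexivevsIntegral} then provides exactly the equivalence $(1)\Leftrightarrow (2)$ we need: $\Spec_P(z)$ is a polynomial if and only if $P$ is reflexive. Combining the two implications gives the corollary in both directions.

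There is no real obstacle: the corollary is stated immediately after the two inputs it relies on, and its role seems to be to record the singularity-side consequence of Proposition \ref{prop:ReflexivevsIntegral}. The only point one might want to emphasize is that both the hypothesis ``convenient'' (so that $P$ contains the origin as an interior point, matching the running assumption on polytopes in Section \ref{sec:Spectra}) and ``nondegenerate in the sense of Kouchnirenko'' are exactly what is required to apply \cite[Corollary 2.2]{D12} and identify $\Spec_f(z)$ with $\Spec_P(z)$; thereafter the proof is just a restatement of Proposition \ref{prop:ReflexivevsIntegral}.
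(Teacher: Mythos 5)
Your argument is correct and is exactly the intended one: the paper itself gives no written proof (the corollary carries only a \qed), precisely because it follows immediately from the identification $\Spec_f(z)=\Spec_P(z)$ of \cite[Corollary 2.2]{D12} combined with the equivalence $(1)\Leftrightarrow(2)$ of Proposition \ref{prop:ReflexivevsIntegral}. Your remark that convenience and nondegeneracy are the hypotheses needed to invoke the identification is the right point to flag.
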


\section{The hard Lefschetz property for Laurent polynomials and polytopes}

\label{sec:HardLefschetzLaurentSimplices}

Let $f$ be a  convenient and nondegenerate Laurent polynomial defined on $(\cit^* )^n$. 
The multiplication by $f$ induces maps 
\begin{equation}\nonumber 
[f]:\gr_{\alpha} ^{\mathcal{N}}\mathcal{A}_f \longrightarrow \gr_{\alpha +1} ^{\mathcal{N}}\mathcal{A}_f 
\end{equation}
for $\alpha\in\qit$. The following definition can already be found in \cite{Sab}:

\begin{definition} 
\label{def:HLforfGen}
Let $f$ be a convenient and nondegenerate Laurent polynomial on $(\cit^* )^n$. We will say that $f$ satisfies the hard Lefschetz property (HL) if
the multiplication by $f$ induces isomorphisms
\begin{equation}\label{eq:HLforfGen1}
[f]^{n-1-2k}:\gr_{\alpha +k}^{\mathcal{N}}\mathcal{A}_f \stackrel{\cong}{\longrightarrow} \gr_{\alpha +n-1-k} ^{\mathcal{N}}\mathcal{A}_f 
\end{equation}
for $0\leq k\leq [(n-1)/2]$ and $\alpha\in ]0,1[$ and
\begin{equation}\label{eq:HLforfGen2}
[f]^{n-2k}:\gr_k ^{\mathcal{N}}\mathcal{A}_f \stackrel{\cong}{\longrightarrow} \gr_{n-k} ^{\mathcal{N}}\mathcal{A}_f 
\end{equation}
for $0\leq k\leq [n/2]$.
\end{definition}

Let now $P$ be a simplicial full-dimensional lattice polytope in $\rit^n$ containing the origin as an interior point. We will denote by $\mathcal{V}(P ):=\{b_1 ,\cdots ,b_r\}$ the set of its vertices.
Let
\begin{itemize}
\item $\mathcal{X}$ be the Deligne-Mumford stack associated with the stacky fan $\mathbf{\Sigma} :=(\zit^n, \Sigma_P , \mathcal{V}(P))$ by \cite[Section 3]{BCS}, 
\item $I_{\mathcal{X}}=\coprod_{\ell\in F} \mathcal{X_{\ell}}$ be the decomposition into connected components of the inertia orbifold of $\mathcal{X}$, see \cite[Section 4.1]{CR}, 
\item $H_{\orb}^{2\alpha}(\mathcal{X}, \cit ):=\oplus_{\ell\in F} H^{2(\alpha-\age (\mathcal{X}_{\ell}))}(\mathcal{X}_{\ell}, \cit )$ be the orbifold cohomology groups of $\mathcal{X}$,
where $\age (\mathcal{X}_{\ell})$ the age of the sector $\mathcal{X}_{\ell}$, see \cite[Definition 4.8]{CR},
\item $f_P$ be the Laurent polynomial on $(\cit^*)^n$ defined by
$f_P (u):=\sum_{b\in\mathcal{V}(P)} u^b $.
\end{itemize}

\noindent See \cite[Proposition 4.7]{BCS} 
for a toric description of the sectors $\mathcal{X}_{\ell}$. 

 The following wonderful result is due to \cite{BCS}, with a little help from \cite{K} (the orbifold cohomology is equipped with the orbifold cup-product $\cup_{\orb}$, see \cite[Section 6]{BCS}).

\begin{proposition} \label{prop:Mir} \cite{BCS}
There is an isomorphism of $\qit$-graded rings
\begin{equation}\nonumber 
\varphi : H_{\orb}^{2*}(\mathcal{X} , \cit )\stackrel{\cong}{\longrightarrow} \gr^{\mathcal{N}}_{*}\mathcal{A}_{f_P}.
\end{equation}
\end{proposition}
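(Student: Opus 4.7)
The plan is to describe both sides of the claimed isomorphism in terms of the same combinatorial data, namely $N$-lattice points in the support of $\Sigma_P$ modulo the characters of $M$, and then to define $\varphi$ by the obvious formula sending the class of a lattice point to the class of the corresponding monomial in $u$.

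For the orbifold cohomology side, I would invoke the Borisov--Chen--Smith presentation \cite[Theorem 1.1]{BCS}, which realizes $H^{2*}_{\orb}(\mathcal{X},\cit)$ as a deformed group ring modulo linear relations: the underlying $\cit$-vector space has basis $\{y^c:c\in N\cap|\Sigma_P|\}$, multiplication is $y^{c_1}\cdot y^{c_2}=y^{c_1+c_2}$ when $c_1$ and $c_2$ lie in a common cone of $\Sigma_P$ and zero otherwise, and the relations are generated by $\sum_{b\in\mathcal{V}(P)}\langle m,b\rangle\, y^b$ for $m\in M$. The toric description of the sectors \cite[Proposition 4.7]{BCS} uniquely decomposes each $c\in N\cap|\Sigma_P|$ as $c=v+\sum_i n_i b_i$ with $v\in\boite(\sigma(c))$ and $n_i\in\nit$; combining the age of the sector indexed by $v$ with the ordinary cohomological degree of $\prod_i b_i^{n_i}$ produces a $\qit$-grading whose value on $y^c$ is exactly $\nu(c)$.

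For the Jacobi ring side, I would use Kouchnirenko's theorem \cite{K} together with its Newton-filtration refinement: since $f_P$ is convenient and nondegenerate, the monomials $\{u^v\}_{v\in N\cap|\Sigma_P|}$ span $\mathcal{A}_{f_P}$, the subspace $\mathcal{N}_\alpha\mathcal{A}_{f_P}$ is spanned by those classes with $\nu(v)\leq\alpha$, and $\gr^{\mathcal{N}}_\alpha\mathcal{A}_{f_P}$ admits as basis $\{[u^v]:\nu(v)=\alpha\}$. Moreover the generators $u_i\partial_{u_i}f_P=\sum_{b\in\mathcal{V}(P)} b_i\, u^b$ of $\mathcal{L}$ can be repackaged, by linearity in $m\in M$, as precisely the BCS linear relations $\sum_{b\in\mathcal{V}(P)}\langle m,b\rangle\, u^b$. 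Thus both sides are described by the same generators and the same linear relations, so the map $y^c\mapsto [u^c]$ extends to a graded linear isomorphism.

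The main obstacle, and the step I expect to require the most care, is the verification that $\varphi$ respects the multiplicative structure on associated gradeds. When $c_1$ and $c_2$ lie in a common cone of $\Sigma_P$, additivity of $\nu$ on that cone gives $\nu(c_1+c_2)=\nu(c_1)+\nu(c_2)$ and the identity $\varphi(y^{c_1}\cdot y^{c_2})=[u^{c_1+c_2}]=\varphi(y^{c_1})\cdot\varphi(y^{c_2})$ is immediate. The delicate case is when $c_1$ and $c_2$ do not share a cone: here one must show that $[u^{c_1+c_2}]$ vanishes in $\gr^{\mathcal{N}}_{\nu(c_1)+\nu(c_2)}\mathcal{A}_{f_P}$. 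The mechanism is the strict convexity of $\nu$, which gives $\nu(c_1+c_2)<\nu(c_1)+\nu(c_2)$ in that case; one rewrites $u^{c_1+c_2}$ modulo $\mathcal{L}$, using the linear relations $\sum_b\langle m,b\rangle u^b=0$ iteratively in the spirit of Kouchnirenko's division argument, as a combination of monomials of Newton weight at most $\nu(c_1+c_2)<\nu(c_1)+\nu(c_2)$, so that its class in the appropriate graded piece is zero. Once this multiplicativity is established, bijectivity follows by matching the two monomial bases cell by cell in the boxes of the cones of $\Sigma_P$.
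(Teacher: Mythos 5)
Your proposal is correct and takes essentially the same route as the paper: both sides are identified with the Borisov--Chen--Smith deformed group ring presentation of \cite[Theorem 1.1]{BCS}, with Kouchnirenko's strictness of the Koszul-type map with respect to the Newton filtration supplying the fact that $\gr^{\mathcal{N}}_{*}\mathcal{A}_{f_P}$ is generated by the monomial classes modulo the symbols $\sum_{b\in\mathcal{V}(P)}\langle m,b\rangle u^{b}$ of the generators of $\mathcal{L}$. The paper's proof is just a terser version of yours (it leaves the grading and multiplicativity checks implicit), so no further comparison is needed.
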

\begin{proof} Notice first that $f_P$ is convenient (because $P$ contains the origin as an interior point) and nondegenerate (thanks to the simpliciality assumption) with respect to its Newton polytope $P$. By \cite[Th\'eor\`eme 4.1]{K}, the map 
$\partial : \cit [u, u^{-1}]^n \rightarrow \cit [u, u^{-1}]$
defined
by $\partial (b_1 ,\cdots , b_n )=b_1 u_1 \frac{\partial f}{\partial u_1 }+\cdots + b_n u_n \frac{\partial f}{\partial u_n }$ is strict with respect 
to the Newton filtration. Hence, 
and by the definition of the Newton filtration,
the graded ring $\gr_{*}^{\mathcal{N}}\mathcal{A}_{f_P}$ is nothing but the "Stanley-Reisner presentation" of $\mathcal{X}$ given by the right hand side of \cite[Theorem 1.1]{BCS} and the result follows from {\em loc. cit.}
\end{proof}

\noindent It should be emphasized that Proposition \ref{prop:Mir} provides an isomorphism of {\em rings}, and this really depends on the special form of $f_P$, from which we also get $\varphi^{-1} ([f_P ])\in H^{2}(\mathcal{X}_{0}, \cit )$ where $\mathcal{X}_{0}$ denotes the untwisted sector.

 The cohomology $H^{*}(\mathcal{X}_{\ell},  \cit)$ of the twisted sector $\mathcal{X}_{\ell}$ is a $H^{*} (\mathcal{X}_{0}, \cit)$-module under the orbifold cup-product, and this module structure is basically given by the standard cup-product on $H^{*}(\mathcal{X}_{\ell}, \cit)$, see for instance \cite[Proposition 3.2]{Fe}, \cite[Proof of Theorem 1.1]{BCS}. We define, for 
$\omega\in H^{2} (\mathcal{X}_0 ,\cit )$, 
\begin{equation}
L_{\omega} (\eta ):=\pi_0^* \omega \cup_{\orb} \eta 
\end{equation}
where $\pi_0$ denotes the restriction of $\pi : I_{\mathcal{X}}\longrightarrow \mathcal{X}$ to the non-twisted sector.
We have the following counterpart of Definition \ref{def:HLforfGen}:

\begin{definition} 
\label{def:HLforPGen}
We will say that $P$ satisfies the hard Lefschetz property (HL) if there exists $\omega\in H^{2}(\mathcal{X}_{0}, \cit )$ such that the orbifold cup-product by $\omega$ induces isomorphisms
\begin{equation}\label{eq:LefConditionsPolytope1}
L_{\omega}^{n-1-2k} : H^{2(\alpha +k)}_{\orb} (\mathcal{X}, \cit )\stackrel{\cong}{\longrightarrow}H^{2(\alpha +n-1-k)}_{\orb}(\mathcal{X}, \cit )
\end{equation}  
for $0\leq k\leq [(n-1)/2]$ and $\alpha\in ]0,1[$ and
\begin{equation}\label{eq:LefConditionsPolytope2}
L_{\omega}^{n-2k} : H^{2k}_{\orb} (\mathcal{X}, \cit )\stackrel{\cong}{\longrightarrow}H^{2(n-k)}_{\orb}(\mathcal{X}, \cit ) 
\end{equation}
for $0\leq k\leq [n/2]$.
\end{definition}

\begin{remark} \label{rem:LefschetzWPS}
Assume that $P$ is a (reduced) simplex, as in Section \ref{sec:simplices} below (and we will mainly consider this situation).
The corresponding orbifold is a weighted projective space $\ppit (w)$.  We have $\dim H^2 (\mathcal{X}_{0}, \cit )=1$ and this vector space is generated by the Chern class $\omega :=c_1 (\mathcal{O}_{\ppit (w)} (1))$
(see \cite[Proposition 3.6 and Remark 3.7 ]{Man} for the definition of $\mathcal{O}_{\ppit (w)} (1)$ and its restrictions to the various sectors). The action of $L_{\omega}$ is then computed using Corollary 3.18 of loc. cit.
\end{remark}

We now give criteria for this hard Lefschetz property to be true. Let $P$ be a simplicial full dimensional lattice polytope in $\rit^n$ containing the origin as an interior point and let $\mathcal{X}$ as above.
For $\ell\in F$, we put $\ell^{-1}:=I(\ell )$ where $I$ is the involution on $F$ induced by the involution  
on the inertia orbifold $I_{\mathcal{X}}=\coprod_{\ell\in F} \mathcal{X_{\ell}}$ defined in \cite[(4.3)]{CR}. 
We will denote by $[x]$ the integral part of $x$. First, we have the following generalization of a result of Fernandez \cite{Fe}:

\begin{theorem}\label{theo:CNSHLPolytope}
The polytope $P$ satisfies (HL) if and only if
\begin{equation}\label{eq:CNSAgeNonEntier}
[\age (\mathcal{X}_{\ell})]=[\age (\mathcal{X}_{\ell^{-1}})] 
\end{equation}
for all $\ell\in F$.
\end{theorem}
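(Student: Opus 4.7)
The plan is to reduce property (HL) to classical hard Lefschetz on each twisted sector. Orbifold cup product by $\omega \in H^{2}(\mathcal{X}_0,\cit )$ acts on the decomposition
$$H_{\orb}^{2\alpha}(\mathcal{X}, \cit ) = \bigoplus_{\ell \in F} H^{2(\alpha - \age (\mathcal{X}_\ell))}(\mathcal{X}_\ell , \cit)$$
as the ordinary cup product by $\omega|_{\mathcal{X}_\ell}$ on each summand, as recalled just before Definition \ref{def:HLforPGen}. In particular $L_\omega$ preserves the sector decomposition, so the maps (\ref{eq:LefConditionsPolytope1})--(\ref{eq:LefConditionsPolytope2}) are isomorphisms if and only if their restrictions to every sector are. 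I take $\omega$ to be an ample class on the projective toric orbifold $\mathcal{X}$; each twisted sector is itself a projective toric suborbifold (by the description in \cite[Proposition 4.7]{BCS}), and the restriction $\omega|_{\mathcal{X}_\ell}$ remains K\"ahler, so classical hard Lefschetz furnishes isomorphisms
$$L_\omega^{m}:H^{n_\ell - m}(\mathcal{X}_\ell ,\cit) \stackrel{\cong}{\longrightarrow} H^{n_\ell + m}(\mathcal{X}_\ell ,\cit), \quad n_\ell := \dim_{\cit}\mathcal{X}_\ell.$$

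The heart of the proof is then degree bookkeeping. Writing $a_\ell := \age(\mathcal{X}_\ell)$, the restriction of (\ref{eq:LefConditionsPolytope1}) to $\mathcal{X}_\ell$ reads
$$L_\omega^{n-1-2k}: H^{2(\alpha+k-a_\ell)}(\mathcal{X}_\ell ,\cit )\longrightarrow H^{2(\alpha+n-1-k-a_\ell)}(\mathcal{X}_\ell ,\cit ).$$
Since $\alpha \in ]0,1[$, this sector can contribute only when $\alpha = \{a_\ell\}$ (in particular $a_\ell \notin \nit$), in which case the source and target cohomological degrees simplify to $2(k-[a_\ell])$ and $2(n-1-k-[a_\ell])$. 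Matching against the exponent $n-1-2k$ of the classical Lefschetz isomorphism, the restricted map is iso for every admissible $k$ if and only if the two degrees are symmetric about $n_\ell$, i.e.\ $n_\ell = n-1-2[a_\ell]$. The identities $a_\ell + a_{\ell^{-1}} = n-n_\ell$ (the codimension of $\mathcal{X}_\ell$) and $\{a_\ell\}+\{a_{\ell^{-1}}\}=1$ (valid whenever $a_\ell\notin \nit$, since the sum is a positive integer) rewrite this condition as $[a_\ell] = [a_{\ell^{-1}}]$. The parallel calculation for the integer-age piece (\ref{eq:LefConditionsPolytope2}) gives $a_\ell = a_{\ell^{-1}}$, which is again $[a_\ell] = [a_{\ell^{-1}}]$.

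For necessity I need not engineer any particular $\omega$: if $[a_\ell] < [a_{\ell^{-1}}]$ for some $\ell$, then $[a_\ell] \leq [(n-1)/2]$ (using $[a_\ell]+[a_{\ell^{-1}}] \leq n-1$), and for $k=[a_\ell]$ the restriction of (\ref{eq:LefConditionsPolytope1}) to $\mathcal{X}_\ell$ sends the nonzero space $H^{0}(\mathcal{X}_\ell )$ into $H^{2(n-1-2[a_\ell])}(\mathcal{X}_\ell)=0$ (because $n-1-2[a_\ell]>n_\ell$). Since $L_\omega$ preserves sectors, the overall map acquires a nonzero kernel and fails to be an isomorphism for every choice of $\omega$; an entirely analogous observation handles the integer-age contribution via (\ref{eq:LefConditionsPolytope2}).

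The main technical obstacle will be the K\"ahler step in the first paragraph: one must justify that an ample class on $\mathcal{X}$ restricts to a K\"ahler class on each toric suborbifold $\mathcal{X}_\ell$, so that classical hard Lefschetz is available there. This is standard for projective toric orbifolds but deserves a careful appeal to the toric description of the sectors in \cite[Proposition 4.7]{BCS}. Once this is in place, the proof reduces to the elementary age arithmetic above, particularly transparent in the simplex setting of Section \ref{sec:simplices}, where each $\mathcal{X}_\ell$ is a weighted projective substack and the action of $L_\omega$ is given explicitly by Chern classes (Remark \ref{rem:LefschetzWPS}).
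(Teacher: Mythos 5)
Your proposal is correct and follows essentially the same route as the paper: restrict $L_{\omega}$ to each sector, observe that only $\alpha=\{\age(\mathcal{X}_{\ell})\}$ contributes, use $\age(\mathcal{X}_{\ell})+\age(\mathcal{X}_{\ell^{-1}})=n-n_{\ell}$ to translate the degree bookkeeping into $[\age(\mathcal{X}_{\ell})]=[\age(\mathcal{X}_{\ell^{-1}})]$, detect failure via $H^{0}(\mathcal{X}_{\ell})\to 0$, and invoke classical hard Lefschetz on each sector (with the ample class restricting to an ample class, which the paper justifies via \cite[Proposition 4.7]{BCS} and the descent of strictly convex support functions) for the converse.
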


\begin{proof} In what follows, we put $i_{\ell}:=\age (\mathcal{X}_{\ell})$.
Assume first that $P$ satisfies the hard Lefschetz property (\ref{eq:LefConditionsPolytope1}). Let $\alpha\in ]0,1[$.
Because the orbifold cup-product by $\omega\in H^{2}(\mathcal{X}_{0}, \cit )$ preserves the cohomology of each sector $\mathcal{X}_{\ell}$,
we get the isomorphisms
\begin{equation}\label{eq:Inter1}
L_{\omega}^{n-1-2k} : H^{2(\alpha +k-i_{\ell})} (\mathcal{X}_{\ell},\cit)\stackrel{\cong}{\longrightarrow}H^{2(\alpha +n-1-k-i_{\ell})} (\mathcal{X}_{\ell}, \cit).
\end{equation}
for $\ell\in F$ and $k\leq [(n-1)/2]$, where $L_{\omega}(\eta )= \pi^*_{\ell} \omega\wedge\eta \in H^{q+2} (\mathcal{X}_{\ell},\cit)$
if $\eta\in H^{q} (\mathcal{X}_{\ell},\cit)$ and
$\pi_{\ell}$ denotes the restriction of $\pi$ to $\mathcal{X}_{\ell}$.
Since (\ref{eq:Inter1}) is relevant only if $\alpha -i_{\ell}\in\zit$, we may assume that $\alpha =i_{\ell}-[i_{\ell}]$.

By \cite[Lemma 4.6]{CR}, we have $n_{\ell}:=\dim \mathcal{X}_{\ell}=n-i_{\ell}-i_{\ell^{-1}}$ and it follows that the isomorphisms (\ref{eq:Inter1}) are equivalent to 
\begin{equation}\nonumber
L_{\omega}^{n-1-2k} : H^{2(k-[i_{\ell}])} (\mathcal{X}_{\ell},\cit)\stackrel{\cong}{\longrightarrow}H^{2(n_{\ell}-1-k
+i_{\ell}+i_{\ell^{-1}} -[i_{\ell}])} (\mathcal{X}_{\ell},\cit).
\end{equation}
Because $i_{\ell}+i_{\ell^{-1}}\in\zit$ and $i_{\ell}\notin\zit$, we have $i_{\ell}+ i_{\ell^{-1}}=[i_{\ell}]+[i_{\ell^{-1}}]+1$  and we finally get the isomorphisms
 \begin{equation}\label{eq:Inter3}
L_{\omega}^{n-1-2k} : H^{2(k-[i_{\ell}])} (\mathcal{X}_{\ell},\cit)\stackrel{\cong}{\longrightarrow}H^{2(n_{\ell}-k
+[i_{\ell^{-1}}])} (\mathcal{X}_{\ell},\cit).
\end{equation}
Since $i_{\ell}+i_{\ell^{-1}}\leq n$, we may assume that
$[i_{\ell}]\leq [(n-1)/2]$ and we can put $k=[i_{\ell}]$ in (\ref{eq:Inter3}) in order to get the isomorphism
\begin{equation}\nonumber 
H^{0} (\mathcal{X}_{\ell}, \cit)\stackrel{\cong}{\longrightarrow}H^{2(n_{\ell}-[i_{\ell}]
+[i_{\ell^{-1}}])} (\mathcal{X}_{\ell},\cit).
\end{equation}
It follows that $[i_{\ell^{-1}}]-[i_{\ell}]\leq 0$. 
In particular, we have also $[i_{\ell^{-1}}]\leq [(n-1)/2]$ and,
by symmetry,  we get $[i_{\ell}]-[i_{\ell^{-1}}]\leq 0$. 
This shows that $[i_{\ell}]=[i_{\ell^{-1}}]$ if $P$ satisfies the hard Lefschetz property (\ref{eq:LefConditionsPolytope1}).
 The result is shown similarly if $P$ satisfies the hard Lefschetz property (\ref{eq:LefConditionsPolytope2}).

 We get the converse going backward, applying the hard Lefschetz theorem for the cohomology of $\mathcal{X}_{\ell}$, see for instance \cite[Theorem 12.5.8 and (12.5.2)]{CLS} where $\omega$ is the cohomology class of an ample divisor (by \cite[Proposition 4.7]{BCS}, its restrictions to the twisted sectors are also ample because a strictly convex function descends to a a strictly convex function on the quotient fan, see for instance \cite[p. 12]{Gross}).
\end{proof}

\begin{remark} Theorem \ref{theo:CNSHLPolytope} has been suggested by $\cite{Fe}$. If
$P$ is reflexive, the ages are integers and $P$ satisfies (HL) if and only if
$\age (\mathcal{X}_{\ell})=\age (\mathcal{X}_{\ell^{-1}})$. This result is already stated in loc. cit.
\end{remark}

\begin{remark} \label{rem:Smooth}
Assume that the toric variety $X_{\Sigma_P}$ is smooth: we have
$\mathcal{X}_P =X_{\Sigma_P}$ and equality (\ref{eq:CNSAgeNonEntier}) holds true since there are no twisted sectors. This matches with \cite[Proposition 3.4]{Sab}. 
\end{remark}

\begin{corollary}\label{coro:CNSHLPol}
If $f_P$ satisfies (HL) if and only if $[\age (\mathcal{X}_{\ell})]=[\age (\mathcal{X}_{\ell^{-1}})]$
for all $\ell\in F$.
\end{corollary}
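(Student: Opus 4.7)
The plan is to derive the corollary directly from Theorem \ref{theo:CNSHLPolytope} by using Proposition \ref{prop:Mir} to transport the hard Lefschetz property from the singularity side to the polytope side. The key observation, already flagged in the excerpt, is that $\omega_0 := \varphi^{-1}([f_P])$ lies in $H^{2}(\mathcal{X}_0,\cit)$ and that $\varphi$ is an isomorphism of $\qit$-graded \emph{rings}. Consequently, for every $\alpha\in\qit$ and every admissible $k$, the iterated multiplication map $[f_P]^{n-1-2k}:\gr^{\mathcal{N}}_{\alpha+k}\mathcal{A}_{f_P}\to\gr^{\mathcal{N}}_{\alpha+n-1-k}\mathcal{A}_{f_P}$ of Definition \ref{def:HLforfGen} corresponds, under $\varphi$, to the iterated orbifold cup-product $L_{\omega_0}^{n-1-2k}:H_{\orb}^{2(\alpha+k)}(\mathcal{X},\cit)\to H_{\orb}^{2(\alpha+n-1-k)}(\mathcal{X},\cit)$ of Definition \ref{def:HLforPGen}, and similarly for the exponents $n-2k$. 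Thus $f_P$ satisfies (HL) if and only if the \emph{specific} class $\omega_0$ witnesses (HL) for $P$.

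With this translation in hand, one direction is immediate: if $f_P$ satisfies (HL), then $\omega_0$ realizes (HL) for $P$, and Theorem \ref{theo:CNSHLPolytope} yields $[\age(\mathcal{X}_{\ell})] = [\age(\mathcal{X}_{\ell^{-1}})]$ for all $\ell\in F$. For the converse I would revisit the proof of Theorem \ref{theo:CNSHLPolytope}, whose sufficiency argument only uses the classical hard Lefschetz theorem applied on each sector $\mathcal{X}_{\ell}$ with the class of an ample divisor $\omega$. So it is enough to verify that $\omega_0$ qualifies: under the Stanley-Reisner presentation recalled in the proof of Proposition \ref{prop:Mir}, the monomial $u^{b}$ corresponds to the class $[D_b]$ of the torus-invariant prime divisor attached to the ray through the vertex $b\in\mathcal{V}(P)$, so $\omega_0=\sum_{b\in\mathcal{V}(P)}[D_b]$. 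This is the class of the toric Cartier divisor whose support function takes the value $1$ on each vertex $b_i$, which is exactly the Newton function $\nu$; since $\Sigma_P$ is the fan over the proper faces of $P$, $\nu$ is strictly convex, and therefore $\omega_0$ is ample on $X_{\Sigma_P}$. By the same \cite[Proposition 4.7]{BCS} already invoked in the proof of Theorem \ref{theo:CNSHLPolytope}, strict convexity descends to each quotient fan, so the restriction of $\omega_0$ to every twisted sector $\mathcal{X}_{\ell}$ is again ample, and the converse proof of Theorem \ref{theo:CNSHLPolytope} applies verbatim to $\omega=\omega_0$.

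The step I expect to be the most delicate is the converse, since Definition \ref{def:HLforPGen} requires only the \emph{existence} of some $\omega$ while $f_P$ singles out the specific class $\omega_0$; the whole point of the argument above is that this $\omega_0$ is forced by the ring structure of $\varphi$ to coincide with the anticanonical/Newton class, and hence to be ample. Once this identification is made, both directions follow from Theorem \ref{theo:CNSHLPolytope} without any further computation.
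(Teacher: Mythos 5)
Your proof is correct and follows essentially the same route as the paper: the forward direction transports (HL) through the ring isomorphism $\varphi$ of Proposition \ref{prop:Mir} and applies Theorem \ref{theo:CNSHLPolytope}, and the converse rests on identifying $\varphi^{-1}([f_P])$ with the class of a ($\qit$-)ample divisor so that the sufficiency argument of that theorem applies to this specific class. The only cosmetic difference is that the paper cites \cite[Lemma 5.1]{BCS} to identify $\varphi^{-1}([f_P])$ as the class of the $\qit$-ample divisor $\sum_i \ell_i^{-1}D_i$, whereas you rederive the same identification from the Stanley--Reisner presentation and the strict convexity of the Newton function.
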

\begin{proof} 
Assume first that $f_P$ satisfies (HL).
We use Proposition \ref{prop:Mir} and Theorem \ref{theo:CNSHLPolytope} in order to get the conditions on the ages. Conversely, the equality of the ages shows that the hard Lefschetz property hold for $P$ (again by Theorem \ref{theo:CNSHLPolytope}) and by \cite[Lemma 5.1]{BCS} the preimage of $f_P$ under the mirror isomorphism $\varphi$ of Proposition \ref{prop:Mir} is the cohomology class 
of the $\qit$-ample divisor $\sum_{i=1}^r \ell_i^{-1} D_i$ where the positive integer $\ell_i$ is defined by $b_i =\ell_i a_i$, $a_i$ denoting the primitive lattice generator of the ray $\rho_i$ of the fan $\Sigma_P$.
\end{proof}

Fortunately, condition (\ref{eq:CNSAgeNonEntier}) has an easy combinatorial description. Let $\mathbf{\Sigma}_P =(\zit^n, \Sigma_P , \mathcal{V}(P))$ be the stacky fan of $P$.
For $\sigma$ a $n$-dimensional cone in the fan $\Sigma_P$, we denote  by
$\boite (\sigma )$ the set of the elements $v\in N$ such that $v=\sum_{\rho_i \subseteq\sigma} q_i b_i $ for some $0\leq q_i <1$ where $\rho_i$ is the ray generated by the vertex $b_i$ of $P$. Let $\boite (\mathbf{\Sigma}_P )$ be the union of $\boite (\sigma )$ for all $n$-dimensional cones $\sigma \in\Sigma_P$.

\begin{proposition} 
\label{prop:ConcretAge}
Condition (\ref{eq:CNSAgeNonEntier}) holds true
if and only if
$$[\nu (v) ]=(\dim \sigma (v) -1)/2\ \mbox{if}\ \nu (v)\notin\nit$$
and 
$$\nu (v)=\dim \sigma (v)/2 \ \mbox{if}\ \nu (v)\in\nit$$
for all $v\in\boite (\mathbf{\Sigma}_P )$, where $\sigma (v)$ the smallest cone of $\Sigma_P$ containing $v$ and $\nu$ is the Newton function  of $P$. 
\end{proposition}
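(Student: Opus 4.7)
The plan is to translate the age condition (\ref{eq:CNSAgeNonEntier}) into the combinatorial statement by using the explicit toric description of the inertia orbifold. By \cite[Proposition 4.7]{BCS}, the set $F$ of connected components of $I_{\mathcal{X}}$ is in bijection with $\boite(\mathbf{\Sigma}_P)$: each $v\in\boite(\mathbf{\Sigma}_P)$ yields a sector $\mathcal{X}_\ell$ (with $v=0$ giving the untwisted sector). If $v=\sum_i q_i b_i$ with $0\le q_i<1$, the definition of the age in \cite[Definition 4.8]{CR} specializes in this toric context to $\age(\mathcal{X}_\ell)=\sum_i q_i$; since the Newton function $\nu$ is linear on each cone of $\Sigma_P$ and satisfies $\nu(b_i)=1$ for every vertex $b_i$, we get $\age(\mathcal{X}_\ell)=\nu(v)$.

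Next, I would describe the involution $I$ at the level of box elements. If $v\in\boite(\sigma)$, write $v=\sum_{\rho_i\subseteq\sigma(v)} q_i b_i$ with $0<q_i<1$, where $\sigma(v)$ is the smallest cone containing $v$ (so only the rays of $\sigma(v)$ occur). Then the combinatorial shadow of the involution from \cite[(4.3)]{CR} is
\[
v^{-1}\;=\;\sum_{\rho_i\subseteq\sigma(v)}(1-q_i)\,b_i,
\]
which again lies in $\boite(\mathbf{\Sigma}_P)$ with the same smallest cone $\sigma(v)$. Therefore
\[
\age(\mathcal{X}_{\ell^{-1}})=\nu(v^{-1})=\dim\sigma(v)-\nu(v).
\]

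Now set $d:=\dim\sigma(v)$ and $a:=\nu(v)$. The condition (\ref{eq:CNSAgeNonEntier}) becomes $[a]=[d-a]$ for every $v\in\boite(\mathbf{\Sigma}_P)$, and I would conclude by elementary case analysis. If $a\in\nit$, then $d-a\in\nit$ as well, and $[a]=[d-a]$ reduces to $a=d/2$, which is exactly $\nu(v)=\dim\sigma(v)/2$. If $a\notin\nit$, writing $a=[a]+\{a\}$ with $\{a\}\in(0,1)$ gives $d-a=(d-[a]-1)+(1-\{a\})$ with $1-\{a\}\in(0,1)$, hence $[d-a]=d-[a]-1$; the equality $[a]=d-[a]-1$ then rearranges to $[a]=(d-1)/2$ (which automatically forces $d$ to be odd), matching the stated condition $[\nu(v)]=(\dim\sigma(v)-1)/2$. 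Since $v=0$ is trivial, these cases exhaust $\boite(\mathbf{\Sigma}_P)$.

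The main step to be careful with is the first paragraph: one must verify, from \cite[Section 4]{BCS} and \cite[Section 4]{CR}, that the age and the involution on $F$ really do translate into $v\mapsto \nu(v)$ and $v\mapsto v^{-1}$ as written, including the fact that indices outside $\sigma(v)$ contribute trivially. The rest is just the two-line arithmetic above.
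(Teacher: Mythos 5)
Your proposal is correct and follows essentially the same route as the paper: both reduce condition (\ref{eq:CNSAgeNonEntier}) to the identity $\age (\mathcal{X}_{v})+\age (\mathcal{X}_{v^{-1}})=\dim \sigma (v)$ together with $\age (\mathcal{X}_v )=\nu (v)$, and then finish with the same integer-part arithmetic. The only (harmless) difference is that you obtain that identity from the explicit description of the involution on box elements, $v\mapsto \sum (1-q_i )b_i$, whereas the paper deduces it from the dimension formula $\dim \mathcal{X}_{v}=n-\age (\mathcal{X}_{v})-\age (\mathcal{X}_{v^{-1}})$ of \cite[Lemma 4.6]{CR} combined with $\dim \mathcal{X}_{v}=n-\dim \sigma (v)$ from \cite[Proposition 4.7]{BCS}.
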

\begin{proof} 
By \cite[Proposition 4.7]{BCS}, the sectors $\mathcal{X}_v$ are parametrized by $v\in\boite (\mathbf{\Sigma}_P )$ and $\dim \mathcal{X}_{v}=n-\dim \sigma (v)$. Let $v\in\boite (\mathbf{\Sigma}_P )$.
Because
$\dim \mathcal{X}_{v}=n-\age (\mathcal{X}_{v})-\age (\mathcal{X}_{v^{-1}})$, we get
$\age (\mathcal{X}_{v})+\age (\mathcal{X}_{v^{-1}})=\dim \sigma (v)$.
Therefore,
$[\age (\mathcal{X}_{v})]+[\age (\mathcal{X}_{v^{-1}})]=\dim \sigma (v)-1$
if $\age (\mathcal{X}_{v})\notin \nit$ 
and $[\age (\mathcal{X}_{v})]+[\age (\mathcal{X}_{v^{-1}})]=\dim \sigma (v)$
if $\age (\mathcal{X}_{v})\in \nit$. Because $\age (\mathcal{X}_v )=\nu (v)$ by \cite[Remark 5.4]{BCS}, the result follows from Theorem \ref{theo:CNSHLPolytope}.
\end{proof}

\noindent This is Theorem \ref{theo:Intro} in the introduction.

\section{Application to simplices}

We apply the previous results to simplices and we deduce some consequences in Hodge theory. 

\subsection{Hard Lefschetz property for simplices}
\label{sec:simplices}

In this text, we will say that the polytope $\Delta :=\conv (v_0 ,\cdots ,v_n )$ is a {\em simplex} if its vertices $v_i$ belong to the lattice $\zit^n$ and if it contains the origin as an interior point. 
The {\em weight} of a simplex $\Delta$ is
the tuple $Q(\Delta )=(q_0 ,\cdots , q_n )$ where 
\begin{equation}\nonumber 
q_i := |\det (v_0 ,\cdots , \widehat{v_i},\cdots , v_n )|
\end{equation}
for $i=0,\cdots ,n$. We will always assume that the tuple $Q(\Delta )=(q_0 ,\cdots , q_n )$ is arranged by increasing order (this can always be achieved by renumbering the vertices) and we will 
put $\mu :=q_0 +\cdots +q_n$.
The simplex $\Delta$ is {\em reduced} if $\gcd (q_0 ,\cdots ,q_n )=1$ (see \cite{Conrads}). 
Up to unimodular transformations, there exists a unique reduced simplex $\Delta$ of weight $(q_0 ,q_1 ,\cdots ,q_n )$ and an algorithm in order to construct it, and therefore to get $f_{\Delta}$, is given in \cite[Theorem 3.6]{Conrads} (recall that $f_{\Delta}$ denotes the Laurent polynomial defined by
$f_{\Delta} (u)=\sum_{i=0}^{n} u^{v_i}$
on $(\cit^* )^n$).

By Remark \ref{rem:Smooth}, the hard Lefschetz property is true if $(q_0 ,\cdots , q_n )=(1,\cdots , 1)$. We  now give a criterion about the remaining cases. Let $\Delta$ be a simplex of weight $(q_0,\cdots ,q_n )$. We define
\begin{equation}\nonumber
F:=\left\{\frac{\ell}{q_{i}}|\, 0\leq\ell\leq q_{i}-1,\ 0\leq i\leq n\right\}.
\end{equation}
We will denote by $f_{1},\cdots , f_{k}$ the elements of $F$ arranged by increasing order and we will put
\begin{equation}\nonumber
d_{i}:=\card \{j|\ q_{j}f_i \in\zit\}. 
\end{equation}
We have $f_1 =0$ and $d_1 =n+1$.

\begin{proposition}\label{prop:CNSHLPolSimplex}
Let $\Delta$ be a reduced simplex of weight $(q_0 ,\cdots , q_n )$ such that $q_n \geq 2$. 
Then $\Delta$ satisfies (HL) if and only if $f_{\Delta}$ satisfies (HL). 
And this happens if and only if
\begin{equation}\nonumber
 [-\mu f_i +\sum_{\ell =1}^{i-1} d_{\ell} ]=\frac{d_1 -d_i -1}{2}\ \mbox{for}\  i\geq 2
\end{equation}
if $-\mu f_i +\sum_{\ell =1}^{i-1} d_{\ell} \notin\zit$
and
\begin{equation}\nonumber
-\mu f_i  +\sum_{\ell =1}^{i-1} d_{\ell}=\frac{d_1 -d_i }{2}\ \mbox{for}\ i\geq 2
\end{equation}
if $-\mu f_i +\sum_{\ell =1}^{i-1} d_{\ell} \in\zit$.
\end{proposition}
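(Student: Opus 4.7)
The plan is to deduce the statement from Corollary~\ref{coro:CNSHLPol} and Proposition~\ref{prop:ConcretAge}, via an explicit parametrization of $\boite(\mathbf{\Sigma}_\Delta )$ by the set $F$.

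The first equivalence ``$\Delta$ satisfies (HL) iff $f_\Delta$ satisfies (HL)'' is immediate: both conditions are characterized by the same requirement $[\age (\mathcal{X}_\ell )]=[\age (\mathcal{X}_{\ell^{-1}})]$ for every sector, by Theorem~\ref{theo:CNSHLPolytope} and Corollary~\ref{coro:CNSHLPol} respectively. So it remains to translate the criterion of Proposition~\ref{prop:ConcretAge} into the stated arithmetic condition on $(q_0 ,\ldots ,q_n )$.

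Next I would set up a bijection $F\to \boite(\mathbf{\Sigma}_\Delta)$. Write $v_0 ,\ldots ,v_n$ for the vertices of $\Delta$; the weight relation $\sum_{j=0}^n q_j v_j =0$ implies that, for every $t\in F$, the element
$$v_t :=\sum_{j=0}^n \{tq_j\}\, v_j =-\sum_{j=0}^n [tq_j]\, v_j$$
lies in $N$. Writing $t=f_i =\ell /q_{j_0}$, one has $\{tq_{j_0}\}=0$, so $v_t$ sits in the box of the maximal cone $\sigma_{j_0}$ opposite $v_{j_0}$; uniqueness of box expansions then shows that $t\mapsto v_t$ is a bijection onto $\boite (\mathbf{\Sigma}_\Delta )$. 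Moreover $\sigma (v_t )$ is spanned by the $v_j$ with $tq_j\notin \zit$, so $\dim \sigma (v_t )=n+1-d_i =d_1 -d_i$; and by linearity of the Newton function,
$$\nu (v_t )=\sum_{j=0}^n \{tq_j\}=\mu f_i -\sum_{j=0}^n [f_i q_j ].$$

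To finish, I would compute the last sum by a double count: grouping the pairs $(k,j)$ with $1\leq k\leq [f_i q_j]$ by the common value $k/q_j =f_\ell \in F$, each such $f_\ell$ with $2\leq \ell \leq i$ is hit exactly $d_\ell$ times, giving $\sum_j [f_i q_j]=\sum_{\ell =2}^i d_\ell$. Using $d_1 =n+1$ one obtains
$$\dim \sigma (v_t )-\nu (v_t )=-\mu f_i +\sum_{\ell =1}^{i-1}d_\ell .$$
The criterion of Proposition~\ref{prop:ConcretAge} is symmetric under the exchange $\nu (v_t )\leftrightarrow \dim \sigma (v_t )-\nu (v_t )$ (both sides have integer part $(\dim \sigma (v_t )-1)/2$ in the non-integer case and both equal $\dim \sigma (v_t )/2$ in the integer case), so it can be imposed directly on $-\mu f_i +\sum_{\ell =1}^{i-1}d_\ell$ and, with $\dim \sigma (v_t )=d_1 -d_i$, yields precisely the two displayed formulas of the statement for $i\geq 2$ (the case $i=1$ is the untwisted sector and is automatic). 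The main obstacle is the bookkeeping: the bijectivity of $t\mapsto v_t$ and the counting identity for $\sum_j [f_i q_j]$; everything else is direct substitution into Proposition~\ref{prop:ConcretAge}.
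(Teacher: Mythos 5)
Your proof is correct and follows the same skeleton as the paper's: reduce the first equivalence to Corollary~\ref{coro:CNSHLPol}, then translate the criterion of Proposition~\ref{prop:ConcretAge} into arithmetic on the weight via the sectors indexed by $F$. The one genuine difference is in how the key input is obtained: the paper simply cites \cite[Section 3.4]{DoMa} for the labelling of the sectors by $F$, the age formula $\age(\mathcal{X}_{f_i})=\sum_{\ell=1}^{i-1}d_\ell-\mu f_i$ and the dimension $\dim\mathcal{X}_{f_i}=d_i-1$, whereas you rederive all of this from scratch through the explicit parametrization $t\mapsto v_t=\sum_j\{tq_j\}v_j$ of $\boite(\mathbf{\Sigma}_\Delta)$, the identification $\dim\sigma(v_t)=d_1-d_i$, and the double-counting identity $\sum_j[f_iq_j]=\sum_{\ell=2}^{i}d_\ell$. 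Your computations check out (including the observation that the criterion of Proposition~\ref{prop:ConcretAge} is invariant under $\nu(v)\leftrightarrow\dim\sigma(v)-\nu(v)$, which is why landing on $\dim\sigma(v_t)-\nu(v_t)=-\mu f_i+\sum_{\ell=1}^{i-1}d_\ell$ rather than on $\nu(v_t)$ itself is harmless); the injectivity of $t\mapsto v_t$ does use reducedness ($\gcd(q_j)=1$), which you should make explicit, and the surjectivity deserves a word (e.g.\ a counting argument or a reference), but these are exactly the bookkeeping points you flagged. The payoff of your route is a self-contained argument; the cost is that you re-prove material already available in \cite{DoMa}.
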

\begin{proof} The first assertion follows from Corollary \ref{coro:CNSHLPol}. 
According to \cite[Section 3.4]{DoMa}, the sectors of $\mathcal{X}_{\Delta}$ are labelled by the set $F$ and
the ages of the sectors $\mathcal{X}_{f_{\ell}}$ are $\age (\mathcal{X}_{f_{1}})=0$ and
$\age (\mathcal{X}_{f_{i}})=\sum_{\ell =1}^{i-1} d_{\ell}-\mu f_{i}$
if $i=2,\cdots ,k$. By the proof of Proposition \ref{prop:ConcretAge}, (HL) holds if and only if $2[\age (\mathcal{X}_v )] =n-1-\dim \mathcal{X}_v$
if $\age (\mathcal{X}_v )\notin \nit$ and $2\age (\mathcal{X}_v ) =n-\dim \mathcal{X}_v$ if $\age (\mathcal{X}_v )\in \nit$. This gives the remaining assertions because $\dim \mathcal{X}_{f_{i}}=d_i -1$ and $d_1 =n+1$.
\end{proof}

\begin{example}\label{example:HLSimplex}
We give here two basic examples.
\begin{enumerate}
\item Let $\Delta$ be the reduced simplex of weight $Q(\Delta )= (1,1,3)$. Then $f_1 =0$, $f_2 =1/3$, $f_3 =2/3$, $d_1 =3$,
$d_2 =1$, $d_3=1$ and $\mu =5$: the simplex $\Delta$ does not satisfy (HL).
\item Let $\Delta$ be the reduced simplex of weight $Q(\Delta )= (1,2,2,3)$, for which $\mu =8$. Then $f_1 =0$, $f_2 =1/3$,
$f_3 =1/2$, $f_4 =2/3$, $d_1 =4$, $d_2 =1$, $d_3 =2$, $d_4 =1$: the simplex $\Delta$ satisfies (HL).
\end{enumerate}
\end{example}

Recall that the reduced simplex $\Delta$ of weight $(q_0 ,\cdots , q_n)$ is reflexive if and only if $q_i$ divides $\mu$ for $i=0,\cdots ,n$, see \cite[Proposition 5.1]{Conrads}.

\begin{corollary}\label{coro:NCSHLSimplices}
A reduced and reflexive simplex $\Delta$ of weight $(q_0 ,\cdots , q_n )$ with $q_n \geq 2$ satisfies (HL) if and only if
$$-\mu f_i +\sum_{\ell =1}^{i-1} d_{\ell}=(d_1 -d_i )/2$$
for $i=2,\cdots ,k$. \qed
\end{corollary}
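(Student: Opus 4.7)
The plan is to deduce the corollary directly from Proposition \ref{prop:CNSHLPolSimplex} by showing that, in the reflexive case, the dichotomy collapses: only the ``integer'' branch of the criterion is ever invoked. Thus the two displayed equalities of Proposition \ref{prop:CNSHLPolSimplex} reduce to the single equality stated in the corollary.

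The key step is to verify that for every $i \geq 2$, the quantity $-\mu f_i + \sum_{\ell=1}^{i-1} d_\ell$ lies in $\zit$. Since the $d_\ell$ are integers by definition, it suffices to check that $\mu f_i \in \zit$ for every $f_i \in F$. By construction of $F$, each $f_i$ is of the form $\ell/q_j$ with $0 \leq \ell \leq q_j - 1$ and $0 \leq j \leq n$. The reflexivity hypothesis, as stated just before the corollary (following \cite[Proposition 5.1]{Conrads}), means precisely that each $q_j$ divides $\mu$, so $\mu f_i = \ell \cdot (\mu/q_j) \in \zit$. Equivalently, this says that the ages $\age(\mathcal{X}_{f_i})$ are integers, which is consistent with the fact that all spectral numbers of a reflexive polytope are integers (cf.\ Proposition \ref{prop:ReflexivevsIntegral} and Corollary \ref{coro:UnipotentReflexif}).

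Once this integrality is established, the first alternative in Proposition \ref{prop:CNSHLPolSimplex} (the condition involving the integer part $[\,\cdot\,]$, triggered when $-\mu f_i + \sum_{\ell=1}^{i-1} d_\ell \notin \zit$) is vacuous, and the hard Lefschetz property is equivalent to the second alternative alone, which is exactly the equality asserted in the corollary. No new computation is needed beyond the arithmetic observation above, so there is no real obstacle; the main point to get right is simply the bookkeeping that identifies $f_i \in F$ with a fraction $\ell/q_j$ whose denominator divides $\mu$ by reflexivity.
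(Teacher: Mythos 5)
Your proposal is correct and is precisely the (implicit) argument behind the paper's corollary, which is stated without proof as an immediate consequence of Proposition \ref{prop:CNSHLPolSimplex}: reflexivity of the reduced simplex is equivalent to $q_j \mid \mu$ for all $j$, so $\mu f_i \in \zit$ for every $f_i \in F$, the non-integer branch of the criterion is vacuous, and only the single equality remains. Nothing to add.
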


\begin{remark}\label{rem:NCHLSimplices}
Assume that the reduced and reflexive simplex $\Delta$ satisfies (HL). Then, if $q_n \geq 2$, it follows from Corollary \ref{coro:NCSHLSimplices} that we must have
\begin{equation}\label{eq:NCHLSimplices}
\frac{2\mu }{q_n }=n+1 +m(q_n ) 
\end{equation}
where $m(q_n )$ denotes the multiplicity of $q_n$ in the tuple $(q_0 ,\cdots , q_n )$ because $f_2 =1/q_n$ and $d_2 =m(q_n )$ (recall that we assume that the tuple $(q_0 ,\cdots , q_n )$ is arranged by increasing order): this is Proposition \ref{prop:NCHLSimplicesIntro} in the introduction. Most of the time it will be enough to notice that this necessary condition does not hold in order to show that the hard Lefschetz condition (HL) fails for $\Delta$. 
\end{remark}

\begin{example} \label{ex:HLSimplicesSmallDim}
Reduced and reflexive simplices are classified up to dimension four in \cite{Conrads}. Using Corollary \ref{coro:NCSHLSimplices} and Remark \ref{rem:NCHLSimplices}, we get the following statements:
\begin{itemize}
\item two dimensional reduced and reflexive simplices satisfy the hard Lefschetz property;
\item if $n=3$,  there are $14$ reduced and reflexive simplices (up to unimodular transformations) and
the hard Lefschetz property hold only for the simplices with weights $(1,1,1,1)$ and $(1,1,2,2)$.
\item if $n=4$, there are $147$ reduced and reflexive simplices (up to unimodular transformations) and the hard Lefschetz property hold only for the simplices $\Delta$ with weights $(1,1,1,1,1)$, $(1,1,1,1,2)$, $(1,1,2,2,2)$, $(1,2,3,3,3)$ and $(1,2,2,3,4)$.
\end{itemize}
Of course, our results apply to greater dimensions: for instance, it is immediately seen the reduced and reflexive simplex of weight $(1,1,1,1,1,1,3)$ in $\rit^6$ does not satisfy the hard Lefschetz property.
\end{example}

\subsection{Application to Hodge theory for reflexive simplices}
\label{sec:KKP}

We keep in this section the setting and the notations of \cite{Sab}.
Let $f$ be a convenient and nondegenerate Laurent polynomial on $(\cit^*)^n$ and let $P$ be its Newton polytope.
It is known that $f$ defines a mixed Hodge structure $MHS_f :=(H, F^{\bullet}H, W_{\bullet}H)$ and
this mixed Hodge structure is said to be of {\em Hodge-Tate type} if
\begin{enumerate}
\item $W_{2i+1}H=W_{2i}H$ for $i\in\zit$,
\item the filtrations $F^{\bullet} H$ and $W_{2\bullet}$ are opposite, that is $\gr_F^p \gr_{2q}^W H =0$ for $p\neq q$.
\end{enumerate}
\noindent The link with the hard Lefschetz property is given by the following result:

\begin{proposition}\cite[Corollary 2.6]{Sab}
\label{prop:CNSHodgeTatef}
The following are equivalent:
\begin{enumerate}
\item the mixed Hodge structure $MHS_f$ is of Hodge-Tate type,
\item  $f$ satisfies the hard Lefschetz property of Definition \ref{def:HLforfGen}.\qed
\end{enumerate}
\end{proposition}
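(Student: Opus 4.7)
The plan is to work through the dictionary between the Brieskorn-lattice construction of $MHS_f$ and the Jacobi ring $\mathcal{A}_f$ equipped with its Newton filtration. Under this dictionary, the underlying vector space $H$ is identified (up to a twist) with $\mathcal{A}_f$, the Hodge filtration $F^\bullet H$ is the reindexed Newton filtration $\mathcal{N}_\bullet \mathcal{A}_f$, and the multiplication operator $[f]$ coincides, up to a scalar, with the nilpotent endomorphism $N$ whose associated monodromy weight filtration is $W_\bullet$. Once these identifications are in hand, the Hodge-Tate condition translates directly into conditions on how powers of $[f]$ act on the graded pieces.

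First I would set up the dictionary on each of the two relevant summands. The spectrum at infinity splits into an integer part (the unipotent summand of the monodromy, centered in weight $n$) and a non-integer part indexed by $\alpha \in {]0,1[}$ (whose centered weight is $n-1$). On the Jacobi side, multiplication by $f$ shifts the Newton degree by $1$ and therefore shifts the weight by $2$, which is exactly the behaviour of the nilpotent endomorphism $N$ attached to each summand. In particular, $[f]$ preserves the decomposition into integer and non-integer parts, so the two halves of Definition \ref{def:HLforfGen} correspond to the two halves of the MHS.

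Second, I would invoke the classical characterization of the monodromy weight filtration of a nilpotent operator $N$: it is the unique increasing filtration such that $N(W_k) \subseteq W_{k-2}$ and $N^k$ induces an isomorphism $\gr^W_{c+k} \stackrel{\cong}{\longrightarrow} \gr^W_{c-k}$ between the pieces symmetric around the center $c$. A mixed Hodge structure is of Hodge-Tate type precisely when $F^\bullet$ and $W_\bullet$ are opposite and the odd-weight pieces vanish, i.e.\ $\gr_F^p \gr^W_{2q} = 0$ for $p \neq q$. Applied on the graded pieces of $\mathcal{A}_f$, this opposition says that the primitive Lefschetz decomposition induced by $N = [f]$ must coincide with the $F$-grading, which is equivalent to asking that
\begin{equation*}
[f]^{n-2k}:\gr^{\mathcal{N}}_k \mathcal{A}_f \stackrel{\cong}{\longrightarrow}\gr^{\mathcal{N}}_{n-k}\mathcal{A}_f
\end{equation*}
be an isomorphism on the integer part (center $n$) and that
\begin{equation*}
[f]^{n-1-2k}:\gr^{\mathcal{N}}_{\alpha+k}\mathcal{A}_f \stackrel{\cong}{\longrightarrow}\gr^{\mathcal{N}}_{\alpha+n-1-k}\mathcal{A}_f
\end{equation*}
be an isomorphism on the non-integer part (center $n-1$). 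These are exactly the two conditions of Definition \ref{def:HLforfGen}, so both implications follow at once.

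The main obstacle is not the combinatorial equivalence of the conditions, which is formal once the dictionary is set, but the careful setup of the dictionary itself: identifying the Hodge filtration with a suitable shift of the Newton filtration on $\mathcal{A}_f$, checking that multiplication by $f$ really is the nilpotent operator whose monodromy weight filtration is $W_\bullet$, and handling the decomposition into unipotent and non-unipotent parts of the monodromy. This uses the $V$-filtration on the Brieskorn lattice, the Laplace transform of the Gauss-Manin system and the construction of $MHS_f$ from it, which are standard in Sabbah's framework; once these are in place, $(1)\Leftrightarrow (2)$ is the formal statement that opposition of $F^\bullet$ and the monodromy weight filtration of $N$ is equivalent to the primitive Lefschetz isomorphisms induced by $N$.
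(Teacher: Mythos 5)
The paper does not actually prove this statement: it is quoted verbatim from Sabbah's paper (the \qed sits directly after item 2, and the proposition carries the citation \cite[Corollary 2.6]{Sab}), so there is no in-paper argument to compare against. Your sketch is a reasonable reconstruction of what the cited proof does: the weight filtration of $MHS_f$ is the monodromy weight filtration of a nilpotent operator that, on the associated graded of the $V$-filtration, is identified with multiplication by $f$ on $\gr^{\mathcal{N}}_{\bullet}\mathcal{A}_f$; the splitting into the unipotent part (centered at $n$) and the part indexed by $\alpha\in{]0,1[}$ (centered at $n-1$) matches the two halves of Definition \ref{def:HLforfGen}; and the equivalence ``$F$ opposite to the monodromy filtration of $N$ $\Leftrightarrow$ $N^{k}$ induces isomorphisms between the $F$-graded pieces symmetric about the center'' is the standard uniqueness characterization of the monodromy filtration. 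The one thing to be honest about is that essentially all of the mathematical content lives in the dictionary you defer to (``$W$ is the monodromy filtration of $[f]$'', ``$\gr_F$ is $\gr^{\mathcal{N}}$ up to reindexing'', strictness so that $[f]$ shifts $F$ by exactly one): that identification, via the Laplace-transformed Gauss--Manin system and the Brieskorn lattice, \emph{is} Sabbah's Lemma~2.4 and Corollary~2.6, so your proposal is best read as an accurate roadmap of the cited proof rather than an independent one. No step you state would fail, but none of the asserted identifications is established within your text.
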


When $P$ is reflexive, we will say that $f$ {\em satisfies the KKP conjecture} if
$\dim\gr^p_F H =\dim \gr^W_{2p} H$ (see \cite[Conjecture 3.6]{KKP}, but also \cite[3.a]{Sab} and \cite{Sha}).
We keep the notations of Section \ref{sec:simplices}.

\begin{proposition}\label{prop:KKPSimplices}
Let $\Delta$ be a reduced and reflexive simplex in $\rit^n$ with weight $Q(\Delta )=(q_0 ,\cdots , q_n )$, where $q_n \geq 2$. 
The Laurent polynomial $f_{\Delta}$ satisfies the KKP conjecture if and only if
\begin{equation}\nonumber 
\sum_{\ell =1}^{i-1} d_{\ell}-\mu f_i =\frac{d_1 -d_i }{2}
\end{equation}
for $i=2,\cdots ,k$. 
\end{proposition}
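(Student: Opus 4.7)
The plan is to treat the ``if'' direction as an immediate consequence of Sabbah's criterion, and to establish the converse by computing both sides of the KKP equality explicitly and comparing them. For the ``if'' direction, Corollary \ref{coro:NCSHLSimplices} shows that the arithmetic condition in the statement is exactly the necessary and sufficient condition for $f_\Delta$ to satisfy (HL); by Proposition \ref{prop:CNSHodgeTatef}, (HL) is equivalent to $MHS_{f_\Delta}$ being of Hodge--Tate type, so $h^{p,q}=0$ for $p\neq q$ and in particular $\dim\gr^p_F H = h^{p,p} = \dim\gr^W_{2p} H$, which is the KKP equality.

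For the converse, I first identify both sides of KKP combinatorially. Since $\Delta$ is reflexive, Proposition \ref{prop:ReflexivevsIntegral} yields integer spectral numbers, so the monodromy at infinity is unipotent and the weight filtration is the monodromy weight filtration of the nilpotent $N=[f_\Delta]$. Through the mirror isomorphism $\varphi$ of Proposition \ref{prop:Mir}, $N$ corresponds to the orbifold cup-product by $\omega=\varphi^{-1}([f_\Delta])\in H^{2}(\mathcal{X}_0,\cit)$; since this class sits in the untwisted sector it preserves the sector decomposition and restricts on each $\mathcal{X}_{f_i}$ (a toric orbifold of dimension $d_i-1$) to an ample class. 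Classical hard Lefschetz on $\mathcal{X}_{f_i}$ then makes $N$ a single Jordan block of size $d_i$ on the contribution from $\mathcal{X}_{f_i}$. With Sabbah's weight filtration centered at the ambient weight $n$, this sector contributes $z^{b_i}(1+z+\cdots+z^{d_i-1})$ to $\sum_p (\dim\gr^W_{2p}H)\,z^p$ when $b_i:=(d_1-d_i)/2\in\zit$ (i.e.\ $d_i\not\equiv n\pmod 2$), and otherwise contributes only to odd weights. Since $\dim\gr^p_F H=\delta_p=\sum_i \mathbf{1}(a_i\le p\le a_i+d_i-1)$ with $a_i=\age(\mathcal{X}_{f_i})$, matching total dimensions under KKP forces $b_i\in\zit$ for every $i$, and the KKP identity reduces to
\[
R(z):=\sum_{i=1}^k (z^{a_i}-z^{b_i})(z^{d_i}-1)=0.
\]

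The final step extracts $a_i=b_i$ from $R(z)=0$. Grouping sectors via the Chen--Ruan involution $\ell\mapsto\ell^{-1}$, which satisfies $d_i=d_{i'}$ and $a_i+a_{i'}=2b_i$ (so $b_i=b_{i'}$), each opposite pair $(i,i')$ contributes $(z^{a_i}+z^{a_{i'}}-2z^{b_i})(z^{d_i}-1)$, which factors as $(z-1)^3\, z^{\min(a_i,a_{i'})}(1+z+\cdots+z^{c_i-1})^2(1+z+\cdots+z^{d_i-1})$ with $c_i:=|a_i-b_i|$, while self-paired sectors contribute zero. Dividing $R(z)$ by $(z-1)^3$ and evaluating at $z=1$ yields the identity $\sum_{\text{pairs}} c_i^2\, d_i = 0$, a sum of nonnegative terms, which forces every $c_i=0$ and therefore $a_i=b_i$ for all $i$.

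The main obstacle I expect is pinning down the precise shape of Sabbah's weight filtration --- namely, that each sector's Jordan block is centered at the ambient weight $n$ in his convention, and consequently that sectors with $d_i\equiv n\pmod 2$ contribute exclusively to odd weights. Once this structural input is in place, the remaining combinatorial argument amounts to the short polynomial-identity calculation sketched above.
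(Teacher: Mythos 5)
Your ``if'' direction is fine and is essentially what the paper does: Corollary \ref{coro:NCSHLSimplices} identifies the stated arithmetic condition with (HL) for $f_{\Delta}$, and Proposition \ref{prop:CNSHodgeTatef} converts (HL) into the Hodge--Tate property, from which the KKP equality $\dim\gr^p_F H=\dim\gr^W_{2p}H$ follows. The paper's entire proof, however, is a citation: by \cite[Lemma 2.4 and Corollary 2.6]{Sab} the KKP equality is \emph{equivalent} to (HL) (both being equivalent to the Hodge--Tate property for these mixed Hodge structures), so the converse comes for free from the same references together with Proposition \ref{prop:CNSHLPolSimplex} and Corollary \ref{coro:NCSHLSimplices}.

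Your converse tries to bypass this equivalence by computing $\dim\gr^W_{2p}H$ directly, and this is where there is a genuine gap. The whole computation rests on the assertion that Sabbah's weight filtration $W_{\bullet}H$ is the monodromy weight filtration, centered at $n$, of a nilpotent operator identified with $[f_{\Delta}]$, and that under the mirror isomorphism of Proposition \ref{prop:Mir} this operator splits sector by sector into single Jordan blocks of sizes $d_i$. You flag this yourself as ``the main obstacle,'' but it is not a detail to be filled in later: it is precisely the nontrivial Hodge-theoretic content that the paper imports from \cite{Sab}. In particular, $[f_{\Delta}]$ acts on $\gr^{\mathcal{N}}\mathcal{A}_{f_{\Delta}}$, i.e.\ on the $F$-graded pieces, whereas the KKP statement concerns $\gr^W_{2p}$ of $H$ itself; transporting the Jordan type of one to the other requires a strictness/compatibility statement between $N$, $F$ and $W$ that you have not established (this is what Lemma 2.4 and Corollary 2.6 of loc.\ cit.\ encode). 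Until that input is justified, the identity $R(z)=0$ is not available and the converse is unproved. The downstream algebra is correct --- with $a_i+a_{i'}=2b_i$ each opposite pair does contribute $(z-1)^3z^{\min (a_i ,a_{i'})}(1+\cdots +z^{c_i -1})^2(1+\cdots +z^{d_i -1})$, and evaluating $R(z)/(z-1)^3$ at $z=1$ gives $\sum c_i^2 d_i =0$, forcing $c_i =0$ --- but it is moot, and in any case redundant: once you are quoting Sabbah at all, \cite[Lemma 2.4]{Sab} already gives KKP $\Rightarrow$ Hodge--Tate $\Rightarrow$ (HL), and Corollary \ref{coro:NCSHLSimplices} finishes the argument in one line.
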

\begin{proof} 
By \cite[Lemma 2.4 and Corollary 2.6]{Sab}, $f$ satisfies the KKP conjecture if and only if $f$ satisfies (HL). Thus, the result
follows from Proposition \ref{prop:CNSHLPolSimplex} and Corollary \ref{coro:NCSHLSimplices}. 
\end{proof}

\noindent If $\Delta$ is a reduced and reflexive simplex in $\rit^n$ for $n=2,3$, we get from Example \ref{ex:HLSimplicesSmallDim} the Laurent polynomials $f_{\Delta}$ which satisfy the KKP conjecture.

\section{Application to the distribution of spectral numbers}
\label{sec:Distribution}

We apply the previous results to the study of the distribution of the spectrum at infinity of a convenient and nondegenerate Laurent polynomial $f$ defined on $(\cit^* )^n$. Recall that a polynomial $a_0 +a_1 z+\cdots +a_n z^n$ is unimodal if there exists an index $j$ such that $a_i \leq a_{i+1}$ for all $i<j$ and $a_i \geq a_{i+1}$ for all $i\geq j$.

\subsection{Unimodality of the spectrum at infinity: unipotent case}
\label{sec:UnimodalPol}

We study in this section the unimodality of the spectrum at infinity if $f$
satisfies the assumption of Corollary \ref{coro:UnipotentReflexif}.
So let us assume that 
$$\Spec_f (z) =1+d(1)z+\cdots +d(n-1)z^{n-1} +z^n$$
where $d(i ):=\dim_{\cit} \gr_{i} ^{\mathcal{N}}\mathcal{A}_f$ for $i=1,\cdots ,n-1$.
The results in this section follow from well-known in combinatorics.
The first one is due to Hibi \cite{Hibi0}:

\begin{proposition}\label{prop:Unimn5}
We have $1\leq d(1)\leq d(i)\ \mbox{for}\  i\leq [n/2]$.
In particular, $\Spec_f (z)$ is unimodal if $n\leq 5$.
\end{proposition}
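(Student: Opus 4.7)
The plan is to translate the statement into the language of the $\delta$-vector and then appeal to classical inequalities due to Hibi. By Corollary \ref{coro:UnipotentReflexif}, the hypothesis that $\Spec_f(z)$ is a polynomial forces the Newton polytope $P$ of $f$ to be reflexive, and Proposition \ref{prop:ReflexivevsIntegral} then identifies $d(i)=\delta_i$ for $0\le i\le n$, where $(\delta_0,\ldots,\delta_n)$ is the $\delta$-vector of $P$. The statement therefore becomes a pair of assertions about the $\delta$-vector of a reflexive lattice polytope.

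First I would establish $d(1)\ge 1$. Since
\[
d(1)=\delta_1=L_P(1)-(n+1)=|P\cap\zit^n|-(n+1),
\]
and since the full-dimensional lattice polytope $P$ has at least $n+1$ vertices and contains the origin as an additional interior lattice point distinct from all of them, $|P\cap\zit^n|\ge n+2$ and hence $\delta_1\ge 1$. For the inequality $d(1)\le d(i)$ when $i\le [n/2]$, I would invoke Hibi's lower bound theorem \cite{Hibi0}: for any $n$-dimensional lattice polytope with a lattice point in its relative interior, $\delta_1\le\delta_i$ for every $1\le i\le n-1$. Restricting to indices $i\le[n/2]\le n-1$ yields the displayed chain.

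For the unimodality assertion in dimension at most five, I would combine the preceding inequalities with the palindromic symmetry $\delta_i=\delta_{n-i}$ of reflexive polytopes recalled in the introduction. For $n\in\{2,3,4,5\}$ the $\delta$-vector takes one of the palindromic shapes $(1,\delta_1,1)$, $(1,\delta_1,\delta_1,1)$, $(1,\delta_1,\delta_2,\delta_1,1)$, $(1,\delta_1,\delta_2,\delta_2,\delta_1,1)$, and in each case the chain $1\le\delta_1\le\delta_i$ for $i\le[n/2]$ together with the symmetry makes unimodality immediate. The only nontrivial input, and the sole potential obstacle, is Hibi's lower bound theorem itself, which is cited as a black box; the remainder of the argument is bookkeeping around Proposition \ref{prop:ReflexivevsIntegral} and a short case analysis.
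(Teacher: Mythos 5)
Your proposal is correct and follows essentially the same route as the paper: identify $d(i)$ with the $\delta$-vector of the (necessarily reflexive) Newton polytope via Corollary \ref{coro:UnipotentReflexif} and Proposition \ref{prop:ReflexivevsIntegral}, invoke Hibi's lower bound inequalities $\delta_0\le\delta_1\le\delta_i$ from \cite{Hibi0}, and conclude unimodality for $n\le 5$ from the palindromic symmetry. Your explicit computation $\delta_1=|P\cap\zit^n|-(n+1)\ge 1$ is a slightly more self-contained justification of the first inequality than the paper's direct appeal to $\delta_0\le\delta_1$, but the argument is the same in substance.
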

\begin{proof} Let $P$ be the Newton polytope of $f$ and let $\delta_P (z) =\delta_0 +\delta_1 z +\cdots +\delta_n z^n$ be its $\delta$-vector. 
By Corollary \ref{coro:UnipotentReflexif} and \cite[Corollary 2.2]{D12}, $P$ is reflexive and $\Spec_f (z) =\delta_0 +\delta_1 z +\cdots +\delta_n z^n$.
By \cite{Hibi0},  we have $\delta_0 \leq \delta_1\leq \delta_j$ for $2\leq j\leq [n/2]$.
The inequalities follow and we use then the symmetry $d(i)=d(n-i)$ in order to get 
the unimodality for $n\leq 5$.
\end{proof}

\noindent Nevertheless, in this situation the spectrum at infinity needs not to be unimodal if $n\geq 6$.
The following counter-example is provided by \cite{Payne}:

\begin{proposition} 
\label{prop:ContreExfUnimodal}
Let $s\geq 2$, $k\geq 2$ be two integers and let $n:=sk$.
Let $f_{\Delta}$ 
be the Laurent polynomial defined by 
$$f_{\Delta} (u_1 ,\cdots , u_n ):=u_1 +\cdots +u_n +\frac{1}{u_1 \cdots u_{n-1} u_n^s}$$
on $(\cit^*)^n$. Then,
\begin{enumerate}
\item $f_{\Delta}$ is convenient and nondegenerate,
\item the Milnor number of $f_{\Delta}$ is equal to $s(k+1)$,
\item $\Spec_{f_{\Delta}}(z)=1 + z + \cdots +z^{sk} + z^{(s-1)k}+z^{(s-2)k}+\cdots +z^k $,
\item the spectrum at infinity of $f_{\Delta}$ is unimodal if and only if $s=2$,
\item  $f_{\Delta}$ satisfies the hard Lefschetz property if and only if $s=2$.
\end{enumerate}
\end{proposition}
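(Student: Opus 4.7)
The plan is to realize $f_{\Delta}$ as $f_P$ for the simplex $\Delta$ with vertices $e_1, \ldots, e_n$ and $v_0 := -e_1 - \cdots - e_{n-1} - s e_n$, and then invoke the results of Section \ref{sec:simplices} term by term. First I would expand the defining determinants to read off the weight $(q_0, \ldots, q_n) = (1, 1, \ldots, 1, s)$; the relation $\sum q_i v_i = 0$ with all $q_i > 0$ places the origin in the interior of $\Delta$, so $f_{\Delta}$ is convenient, and nondegeneracy follows from the simpliciality of $\Delta$ as in the proof of Proposition \ref{prop:Mir}, giving (1). Since $\mu := q_0 + \cdots + q_n = n + s = s(k+1)$ and both $1$ and $s$ divide $\mu$, the simplex $\Delta$ is reflexive by \cite[Proposition 5.1]{Conrads}; the Milnor number then equals $n!\,\vol(\Delta) = \mu = s(k+1)$, giving (2).

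For (3), I would use the description of twisted sectors from \cite[Section 3.4]{DoMa}: here $F = \{(i-1)/s : 1 \leq i \leq s\}$, and since only $q_n = s$ clears denominators one finds $d_1 = n+1$ and $d_i = 1$ for $2 \leq i \leq s$. Plugging this into the age formula recalled in the proof of Proposition \ref{prop:CNSHLPolSimplex} yields
$$\age(\mathcal{X}_{f_i}) = (n + i - 1) - (n+s)\tfrac{i-1}{s} = \tfrac{n(s-i+1)}{s} = k(s-i+1)$$
for $i \geq 2$, while $\age(\mathcal{X}_{f_1}) = 0$. The untwisted sector is the weighted projective space $\mathbb{P}(1, \ldots, 1, s)$ with ordinary Poincar\'e polynomial $1 + z + \cdots + z^n$, and each twisted sector $\mathcal{X}_{f_i}$ ($i\geq 2$) has dimension $d_i - 1 = 0$, so contributes $z^{k(s-i+1)}$. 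Summing over sectors via Proposition \ref{prop:Mir} produces the claimed formula.

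Step (4) is then a direct inspection: the coefficient of $z^j$ in $\Spec_{f_{\Delta}}(z)$ equals $2$ exactly at $j \in \{k, 2k, \ldots, (s-1)k\}$ and $1$ elsewhere, so for $s = 2$ there is a unique peak at $j = k = n/2$ (unimodal), while for $s \geq 3$ the coefficient drops from $2$ at $z^k$ to $1$ at $z^{k+1}$ and returns to $2$ at $z^{2k}$, breaking unimodality. For (5) I would appeal to Remark \ref{rem:NCHLSimplices}: since $q_n = s \geq 2$ has multiplicity one, the necessary condition $2\mu/q_n = n + 1 + m(q_n)$ reduces to $2(n+s)/s = n+2$, forcing $s = 2$. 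Conversely, for $s = 2$ only the index $i = 2$ has to be checked in Corollary \ref{coro:NCSHLSimplices}, and indeed $\age(\mathcal{X}_{f_2}) = k = n/2 = (d_1 - d_2)/2$.

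The main obstacle will be the spectrum computation in (3): one has to match each summand of the orbifold decomposition of \cite{BCS} and \cite{DoMa} with the correct Newton-graded piece of $\mathcal{A}_{f_{\Delta}}$ and keep the age-shift consistent with the grading. Once this dictionary is in place, parts (4) and (5) reduce to arithmetic verifications of the criteria already established in Section \ref{sec:simplices}.
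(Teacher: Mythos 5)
Your proposal is correct and follows essentially the same route as the paper: realize $f_{\Delta}$ via the reduced reflexive simplex of weight $(1,\cdots ,1,s)$, compute $F$, the $d_i$ and the ages $k(s-i+1)$ from \cite{DoMa}, read off the spectrum, and settle (5) by the criterion of Corollary \ref{coro:NCSHLSimplices}. The only cosmetic difference is that you assemble the spectrum from the sector-by-sector orbifold cohomology via Proposition \ref{prop:Mir}, whereas the paper quotes the block formula $\beta_i ,\beta_i +1,\cdots ,\beta_i +d_i -1$ directly; these are the same computation.
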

\begin{proof} 
Let $\Delta :=\conv (e_1 ,\cdots ,e_n ,-\sum_{i=1}^n q_i e_i )$
where $n:=sk$, $(e_1 ,\cdots ,e_n )$ is the canonical basis of $\rit^n$ and $(q_1 ,\cdots ,q_n ):=(1,\cdots ,1,s)$ where $1$ is counted $sk-1$-times.  
The simplex $\Delta$ is reduced and reflexive and is
the Newton polytope of $f_{\Delta}$. 
Its weight is $(q_0 ,q_1 ,\cdots ,q_n )=(1,\cdots ,1,s)$ where $1$ is counted $sk$-times and $\mu_{\Delta}=s(k+1)$.
The nondegeneracy follows from the fact that the facets of $\Delta$ are simplices. The assertion on the Milnor number follows from \cite{K}. Using the results recalled in Section \ref{sec:simplices}, we get
$f_1 =0, f_2 =1/s,\cdots , f_s =(s-1)/s$, $d_1=n+1, d_2 =\cdots =d_s =1$. Define $\beta_1 :=0$ and 
$$\beta_i :=d_1 +\cdots +d_{i-1}-\mu f_i =k(s-(i-1))$$ 
for $i=2, \cdots ,s$. By \cite{DoMa},
the spectrum at infinity of $f_{\Delta}$ is given by
$\beta_{1} , \beta_{1}+1,\cdots , \beta_{1}+d_{1}-1,  
\cdots ,
\beta_{k} , \beta_{k}+1,\cdots , \beta_{k}+d_{k}-1$, and the formula for $\Spec_{f_{\Delta}}(z)$ follows. The assertion about unimodality is clear and for the last statement, notice that the necessary and sufficient condition of Corollary \ref{coro:NCSHLSimplices} is $s=2(i-1)$ for $i=2,\cdots ,s$ and is satisfied only for $s=2$.
\end{proof}

\begin{remark}
Because $\Delta$ is reflexive, $\Spec_{f_{\Delta}}(z)$ is equal to the $\delta$-vector of $\Delta$, see Proposition \ref{prop:ReflexivevsIntegral}. This formula 
for the  $\delta$-vector of $\Delta$ can already be found in \cite{Payne}. 
\end{remark}

If $n\geq 6$, we have the following positive result:

\begin{proposition}
\label{prop:HLUnimRef}
Let $f$ be a convenient and nondegenerate Laurent polynomial on $(\cit^* )^n$ whose spectrum at infinity is a polynomial. Assume that $f$ satisfies the hard Lefschetz property of definition \ref{def:HLforfGen}.
Then $\Spec_f(z)$ is unimodal.
\end{proposition}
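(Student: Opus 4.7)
The plan is as follows. First, since $\Spec_f(z)$ is assumed to be a polynomial, Corollary \ref{coro:UnipotentReflexif} ensures that the Newton polytope $P$ of $f$ is reflexive, and we may write
$$\Spec_f(z) = \sum_{k=0}^{n} d(k)\, z^{k}, \qquad d(k)=\dim_{\cit}\gr_k^{\mathcal{N}}\mathcal{A}_f.$$
In particular $\gr_\alpha^{\mathcal{N}}\mathcal{A}_f = 0$ for $\alpha \notin \{0,1,\ldots,n\}$, so the isomorphisms (\ref{eq:HLforfGen1}) of Definition \ref{def:HLforfGen} are automatic (both sides vanish), and only the integer-indexed isomorphisms
$$[f]^{n-2k}:\gr_k^{\mathcal{N}}\mathcal{A}_f \stackrel{\cong}{\longrightarrow} \gr_{n-k}^{\mathcal{N}}\mathcal{A}_f, \qquad 0\leq k \leq [n/2],$$
carry content.

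Next I would extract two consequences of these isomorphisms in the classical Lefschetz manner. The first is the symmetry $d(k)=d(n-k)$, which is immediate from the existence of the iso. The second is the injectivity of multiplication by $[f]$ in low degrees: for $0\leq k< n/2$, the factorization $[f]^{n-2k}=[f]^{n-2k-1}\circ [f]$ shows that $[f]:\gr_k^{\mathcal{N}}\mathcal{A}_f \to \gr_{k+1}^{\mathcal{N}}\mathcal{A}_f$ must be injective, since its further iterate is an isomorphism, hence injective. This yields $d(k)\leq d(k+1)$ for $0\leq k<n/2$.

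Chaining these inequalities gives $d(0)\leq d(1)\leq \cdots \leq d([n/2])$, and applying the symmetry $d(k)=d(n-k)$ then produces $d([n/2])\geq d([n/2]+1)\geq \cdots \geq d(n)$. This is exactly the unimodality of $\Spec_f(z)$.

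There is essentially no obstacle here: once one observes that the polynomiality of $\Spec_f$ kills the fractional-$\alpha$ part of the hard Lefschetz hypothesis and reduces Definition \ref{def:HLforfGen} to its integer-indexed half, the argument is the standard textbook derivation of unimodality from hard Lefschetz (as in Stanley's proof that the Poincar\'e polynomial of a smooth projective variety is unimodal, mentioned in the introduction). The only slightly delicate point is to make sure one invokes (\ref{eq:HLforfGen2}) rather than (\ref{eq:HLforfGen1}), which is justified by Corollary \ref{coro:UnipotentReflexif}.
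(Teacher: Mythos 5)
Your proof is correct and follows essentially the same route as the paper: the standard deduction of unimodality from hard Lefschetz via injectivity of $[f]$ below the middle degree. The only cosmetic difference is that the paper handles the upper half by noting $[f]$ is surjective for $i>n/2$, whereas you invoke the symmetry $d(k)=d(n-k)$; both are immediate from the isomorphisms (\ref{eq:HLforfGen2}).
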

\begin{proof} 
The hard Lefschetz property shows that $[f]:\gr_{i-1} ^{\mathcal{N}}\mathcal{A}_f \longrightarrow \gr_{i} ^{\mathcal{N}}\mathcal{A}_f $ is injective for $i\leq n/2$ and surjective for $i>n/2$. 
\end{proof}

\noindent Of course, the converse is not true: by Proposition \ref{prop:Unimn5}, $\Spec_{f_{\Delta}}(z)$ is unimodal for any four dimensional reduced and reflexive simplex $\Delta$ but $f_{\Delta}$ does not satisfy the hard Lefschetz property in general (see Example \ref{ex:HLSimplicesSmallDim}).

\subsection{Unimodality of the spectrum at infinity: the general case}

We consider now the general case, that is when the spectrum at infinity of $f$ is not necessarily a polynomial:
we write 
$$\Spec_f (z) =\sum_i d(\alpha_i )z^{\alpha_i}$$ 
where $d(\alpha_i ):=\dim_{\cit} \gr_{\alpha_i} ^{\mathcal{N}}\mathcal{A}_f$ and $\alpha_i \in\qit$, the rational numbers $\alpha_i$ being arranged by increasing order.
Using the symmetry property
 $z^n \Spec_f (z^{-1})=\Spec_f (z)$, one would expect that
\begin{equation}\label{eq:UnimSpectre} 
d(\alpha_1 )\leq d(\alpha_2 )\leq \cdots \leq  d(\alpha_{\ell})
\end{equation}
for all $\alpha_{\ell} \leq n/2$.
Unfortunately, and unlike Section \ref{sec:UnimodalPol}, this may fail if $n\leq 5$ or if $f$ satisfies the hard Lefschetz property (HL) (see example \ref{ex:ContreExUnimGen} below).

So what gives in this case this hard Lefschetz property?
Let us write $\Spec_f (z) = \sum_{\alpha\in [0,1[} z^{\alpha} \Spec_f^{\alpha} (z)$
where $\Spec_f^{\alpha} (z)\in\qit [z]$.

\begin{proposition}
\label{prop:HLUnimGen}
Assume that $f$ satisfies the hard Lefschetz property of definition \ref{def:HLforfGen}.
Then the polynomials $\Spec_f^{\alpha} (z)$ are unimodal for $\alpha\in [0,1[$.
\end{proposition}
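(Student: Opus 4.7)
The plan is to mimic the proof of Proposition~\ref{prop:HLUnimRef}, noting that the hard Lefschetz isomorphisms of Definition~\ref{def:HLforfGen} are iterates of $[f]$ and therefore preserve the residue class of the exponent modulo $\zit$. Thus each polynomial $\Spec_f^{\alpha}(z)=\sum_{k}d(\alpha+k)z^{k}$ can be analyzed independently, using only the HL isomorphisms that live in the $\alpha$-strand.

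First I would fix $\alpha\in[0,1[$. If $\alpha\in ]0,1[$, the isomorphism (\ref{eq:HLforfGen1}) reads
\[
[f]^{n-1-2k}:\gr_{\alpha+k}^{\mathcal{N}}\mathcal{A}_{f}\stackrel{\cong}{\longrightarrow}\gr_{\alpha+n-1-k}^{\mathcal{N}}\mathcal{A}_{f}
\]
for $0\le k\le[(n-1)/2]$; the case $\alpha=0$ is identical using (\ref{eq:HLforfGen2}). Factoring this iterated multiplication as $[f]^{n-1-2k}=[f]^{n-2-2k}\circ[f]$ forces the first factor $[f]:\gr_{\alpha+k}^{\mathcal{N}}\mathcal{A}_{f}\to\gr_{\alpha+k+1}^{\mathcal{N}}\mathcal{A}_{f}$ to be injective whenever $n-1-2k\ge 1$. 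Symmetrically, writing $[f]^{n-1-2k}=[f]\circ[f]^{n-2-2k}$ forces $[f]:\gr_{\alpha+n-2-k}^{\mathcal{N}}\mathcal{A}_{f}\to\gr_{\alpha+n-1-k}^{\mathcal{N}}\mathcal{A}_{f}$ to be surjective under the same range.

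Next I would translate these two facts into inequalities on dimensions: injectivity yields $d(\alpha+k)\le d(\alpha+k+1)$ for all $k$ in the lower half of the support of $\Spec_{f}^{\alpha}(z)$, and surjectivity yields $d(\alpha+j)\ge d(\alpha+j+1)$ for all $j$ in the upper half. A short bookkeeping check confirms that every pair of consecutive exponents in the support is covered by either the injective or the surjective regime (and the two regimes overlap at the midpoint), so the sequence $k\mapsto d(\alpha+k)$ is non-decreasing then non-increasing. This is precisely unimodality of $\Spec_{f}^{\alpha}(z)$.

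The only delicate step is the index accounting, and it is mild: one must verify that the ranges in (\ref{eq:HLforfGen1}) and (\ref{eq:HLforfGen2}) indeed produce injectivity up to the midpoint and surjectivity past it in both parities of $n$, and that the endpoints of the support of $\Spec_{f}^{\alpha}(z)$ (namely $k=0$ and $k$ equal to the largest integer with $\alpha+k\le n$) are handled correctly via the symmetry $z^{n}\Spec_{f}(z^{-1})=\Spec_{f}(z)$, which matches $\Spec_{f}^{\alpha}$ with $\Spec_{f}^{1-\alpha}$. No genuinely new ingredient beyond HL is needed.
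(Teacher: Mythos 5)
Your proposal is correct and follows essentially the same route as the paper's (much terser) proof: factor the HL isomorphisms $[f]^{n-1-2k}$ (resp.\ $[f]^{n-2k}$ for $\alpha=0$) to get injectivity of $[f]$ on the lower half of each $\alpha$-strand and surjectivity on the upper half, which gives unimodality of the dimensions. The index bookkeeping you flag does check out in both parities, so no gap.
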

\begin{proof} 
The hard Lefschetz assumption shows that $[f]:\gr_{\alpha +i-1} ^{\mathcal{N}}\mathcal{A}_f \longrightarrow \gr_{\alpha +i} ^{\mathcal{N}}\mathcal{A}_f $ is injective for $i\leq (n-1)/2$ and surjective for $i>(n-1)/2$ for $\alpha \in ]0,1[$. The case $\alpha =0$ has been considered in Proposition \ref{prop:HLUnimRef}.
\end{proof}

\begin{example} \label{ex:ContreExUnimGen}
Let $f$ 
be the Laurent polynomial defined by 
$f(u_1 , u_2 , u_3 )=u_1 +u_2 +u_3 +1/u_1^2 u_2^2 u_3^3 $
on $(\cit^*)^3$. Then
$\Spec_{f}(z)=1 + 2z +z^{4/3}+z^{5/3}+ 2z^{2}+z^3$ 
and does not satisfy (\ref{eq:UnimSpectre}).
However, $f$ satisfies (HL) (see Example \ref{example:HLSimplex}). We have $\Spec_f^{0} (z)=1+2z+2z^2 +z^3$, $\Spec_f^{1/3} (z)=z$,  $\Spec_f^{2/3} (z)=z$ and these polynomials are unimodal. 
\end{example}

\end{document}